\newtheorem{theorem}{Theorem}[section]
\newtheorem{lemma}[theorem]{Lemma}
\newtheorem{prop}[theorem]{Proposition}
\newtheorem{corollary}[theorem]{Corollary}
\newtheorem{definition}[theorem]{Definition}
\numberwithin{equation}{section}
\theoremstyle{remark}
\def\bint{{\ifinner\rlap{\bf\kern.35em--}
\int\else\rlap{\bf\kern.45em--}\int\fi}\ignorespaces}
\title[Sharp stability for critical points of the Sobolev inequality in the absence of bubbling]{Sharp stability for critical points of the Sobolev inequality in the absence of bubbling} 
\author{Gemei Liu and Yi Ru-Ya Zhang}
\date{\today}
\address{ETH Z\"urich, Department of Mathematics, R\"amistrasse 101, 8092, Z\"urich, Switzerland}
\email{gemei.liu@math.ethz.ch}  
\address{State Key Laboratory of Mathematical Sciences, Academy of Mathematics and Systems Science, Chinese Academy of Sciences, Beijing 100190, China}
\address{
Academy of Mathematics and Systems Science, the Chinese Academy of Sciences, Beijing 100190, China}
\email{yzhang@amss.ac.cn}
 \thanks{The first author is grateful to her PhD advisor, Prof. Alessio Figalli, for his guidance and support during the preparation of this manuscript. The second author is funded by National Key R\&D Program of China (Grant No. 2021YFA1003100), the Chinese Academy of Science,  and  NSFC grant No. 12288201. Both authors sincerely appreciate Prof. Giulio Ciraolo and Michele Gatti for identifying a small gap in the original version of the paper.}
\subjclass[2020]{49J40, 35R20}
\keywords{Sharp quantitative stability, $p$-Sobolev inequality, Struwe's decomposition}
\begin{document}

\begin{abstract}
   When $u$ is close to a single Talenti bubble $v$ of the $p$-Sobolev inequality, we show that
   \begin{equation*}
   \|Du-Dv\|_{L^p(\mathbb{R}^n)}^{\max\{1,p-1\}}\le C  \|-{\rm div}(|Du|^{p-2}Du)-|u|^{p^*-2}u\|_{W^{-1,q}(\mathbb{R}^n)},
\end{equation*}
where $C=C(n,p)>0$. This estimate provides a sharp stability estimate for the Struwe-type decomposition in the single bubble case, generalizing the result of Ciraolo, Figalli, and Maggi \cite{CFM2018} (focusing on the case $p=2$) to the arbitrary $p$. Also, in the Sobolev setting, this answers an open problem raised by Zhou and Zou in \cite[Remark 1.17]{ZZ2023}.
\end{abstract}

\maketitle

\section{Introduction}
\subsection{Background}
Given $n\ge 2$ and $1<p<n,$ we denote by $\dot{W}^{1,p}(\mathbb{R}^n)$ the completion of $C_c^\infty(\mathbb{R}^n)$ under a specific seminorm defined as
$$
\|u\|_{\dot{W}^{1, p}(\mathbb{R}^n)}=\left(\int_{\mathbb{R}^n}|Du|^pdx\right)^{\frac{1}{p}}.
$$
Additionally, denote by $W^{-1,q}(\mathbb{R}^n)$  the dual space of $\dot{W}^{1,p}(\mathbb{R}^n)$ with $q=\frac{p}{p-1}.$

Given $p^*=\frac{np}{n-p},$ the $p$-Sobolev inequality states the existence of a largest positive constant $S_{n,\,p}=S(n,p)>0$ such that 
\begin{equation}\label{p-Sob-inequ}
\|Du\|_{L^p(\mathbb{R}^n)}\ge S_{n,\,p}\|u\|_{L^{p^*}(\mathbb{R}^n)}.
\end{equation}
We refer to this constant $S_{n,\,p}$ as the \emph{optimal Sobolev constant}. Furthermore, according to the works of Aubin \cite{A1976} and Talenti \cite{T1976}, the set of all extremal functions for which the inequality in \eqref{p-Sob-inequ} becomes an equality forms a specific $(n+2)$-dimensional manifold given by: 
$$\mathscr M=\left\{u\in {\dot W}^{1,\,p}(\mathbb R^n)\colon u(x)=\frac{a}{\left(1+b|x-x_0|^{\frac{p}{p-1}}\right)^{\frac{n-p}{p}}}, a\in \mathbb{R}\setminus \{0\},b>0,x_0\in \mathbb{R}^n\right\}.$$

On the other hand, up to a scaling factor, the corresponding Euler--Lagrange equation of \eqref{p-Sob-inequ} is 
\begin{equation}\label{EL-equ}
    -\Delta_p v:=-{\rm div}(|Dv|^{p-2}Dv)=|v|^{p^*-2}v \quad  \text{ in }\ \mathbb{R}^n.
\end{equation}
 Let $\mathcal{M}$ be the $(n+1)$-dimensional manifold as follows
\begin{equation*}
    \mathcal{M}=\left\{aU[z,\lambda]\colon U[z,\lambda]=\frac{\lambda^{\frac{n-p}{p}}}{\left(1+\lambda^{\frac{p}{p-1}}|x-z|^{\frac{p}{p-1}}\right)^{\frac{n-p}{p}}},\ \lambda>0,\ z\in \mathbb{R}^n\right\},
\end{equation*}
where $a\in (0,+\infty)$  is chosen such that every $v\in \mathcal M$ satisfies \begin{equation}\label{norm-value}
    \|Dv\|^p_{L^p(\mathbb{R}^n)}=S_{n,\,p}^n\quad \text{and}\quad \|v\|^{p^*}_{L^{p^*}(\mathbb{R}^n)}=S_{n,\,p}^n.
\end{equation}
According to \cite{A1976,CNV2004,T1976}, $\mathcal{M}$ is the space of all positive solutions of \eqref{EL-equ}. Functions in $\mathcal{M}$ are commonly referred to as \emph{Talenti bubbles}. 

Given that the minimizers of \eqref{p-Sob-inequ} and all positive solutions of \eqref{EL-equ} are known, it is indeed natural to investigate their corresponding stability.

\subsection{Stability of the Sobolev energy}
It follows from the concentration-compactness theorem that, when the Sobolev energy 
$$S_{n,\,p}(u):=\frac{\|Du\|_{L^p(\mathbb R^n)}}{\|u\|_{L^{p^*}(\mathbb R^n)}}$$
approaches  $S_{n,\,p}$, then $u$ is close to $\mathscr M$, qualitatively. 

Brezis and Lieb \cite{BL1985} were the first to address the stability problem, aiming to quantify this closeness in the case where $p=2$. Specifically, for a function  $u\in \dot{W}^{1,2}(\mathbb{R}^n)$, the goal is to establish the following quantitative inequality:  
\begin{equation}\label{Sobolev stability energy}
    S_{n,\,p}(u)-S_{n,\,p}\ge c(n,\,p) \omega(d(u,\,\mathscr M)),
\end{equation}
where  $\omega$ is a sharp modulus of continuity, $d(u,\,\mathscr M)$ is an optimal distance from $u$ to $\mathscr M.$
Typically,  $\omega$  is expected to be of the polynomial form   $t^{\alpha}$ with $ \alpha>0$. The challenge usually lies in finding a sharp exponent $\alpha$.

This problem was completely settled by Bianchi and Egnell \cite{BE1991} a few years later. They utilized the Hilbert structure of $\dot W^{1,\,2}(\mathbb R^n)$ to prove the existence of a constant $C_{BE}=C_{BE}(n)>0$ such that, for any $u\in \dot W^{1,\,2}(\mathbb R^n)$,
$$
   S_{n,\,2}(u)-S_{n,\,2}\ge C_{BE} {\inf_{v\in \mathscr M}}\|D (u-v)\|_{L^2(\mathbb{R}^n)}^2.
$$
It is worth noting that the exponent  $2$ on the right-hand side of this inequality is sharp, and the choice of the distance is optimal. Moreover, obtaining an explicit bound of $C_{BE}$ remained an open problem for a long time, primarily due to the limitations imposed by Lion's concentration and compactness principle. This problem was recently settled by  Dolbeault, Esteban, Figalli, Frank, Loss \cite{DEFFL2022}, who established a lower bound for 
$C_{BE}$ using competing symmetries, continuous Steiner symmetrization, and a refined expansion near the manifold of minimizers.

 For general exponents $1<p<n,$ Cianchi, Fusco, Maggi and Pratelli \cite{CFMP2009}  first proved a stability inequality of the form \eqref{Sobolev stability energy}. Their inequality involved an explicit moduli of continuity 
 $\omega=t^{\alpha}$ with $ \alpha=\alpha(n,\,p)>0$ and a distance given by the  $L^{p^*}$-norm of functions  (rather than their gradients).   
Subsequently, Figalli, Maggi, and Pratelli \cite{FMP2013} proved a version of  stability inequality \eqref{Sobolev stability energy} for the special case $p=1.$ 
 
Building upon the results of \cite{CFMP2009}, Figalli and Neumayer \cite{FN2019}, as well as Neumayer \cite{N2020}, established versions of the stability inequality for $p\ge 2$ and $1<p<2$, respectively, with the optimal distance function associated with the $L^p$-norm of the gradient. Unfortunately, the exponents in their inequalities were not optimal. To address this issue, Figalli and Zhang \cite{FZ2022} proved stability with the optimal gradient distance and the sharp exponent  $\max\{p,\,2\}$ for all $1<p<n$ . They achieved this by establishing new vectorial inequalities and proving some corresponding spectral gap estimates.

\subsection{Stability of the Euler Lagrange equation to the Sobolev inequality}

Alternatively, one can perturb the Euler--Lagrange equation instead of the Sobolev energy. Struwe \cite{S1984} presented a fundamental theorem that studied the decomposition of nonnegative solutions that "almost" solve the Euler--Lagrange equation. Later, Mercuri and Willem \cite{MW2010} extended Struwe's result to the general case $1<p<n$. They provided a more comprehensive understanding of the behavior of solutions to the Euler--Lagrange equation. 
We summarize both of their results as follows. 
\begin{theorem}\label{Struwe-decom}
    Let $n\ge 2,$ $1<p<n$, $q$ the H\"older dual of $p$, and $\nu\ge 1$ be positive integers. Assume that $\{u_k\}_{k\in\mathbb{N}}\subset \dot{W}^{1,p}(\mathbb{R}^n)$ is a sequence of nonnegative functions such that $$\left(\nu-\frac{1}{2}\right)S^n\le \int_{\mathbb{R}^n}|Du_k|^p dx \le \left(\nu+\frac{1}{2}\right)S^n$$ and  $$\|\Delta_p u_k+|u_k|^{p^*-2}u_k\|_{W^{-1, q}(\mathbb{R}^n)}\rightarrow 0\quad \text{as}\ k\rightarrow 0.$$
    Then, there exists a sequence $(z_1^{(k)},\cdots,z_\nu^{(k)})_{k\in \mathbb{N}}$ of $\nu$-tuples of points in $\mathbb{R}^n$ and sequences $(\lambda_1^{(k)},\cdots,\lambda_\nu^{(k)})_{k\in \mathbb{N}}, (a_1^{(k)},\cdots,a_\nu^{(k)})_{k\in \mathbb{N}}$  of $\nu$-tuples of positive real numbers such that 
    $$\left\|D\left(u_k-\sum_{i=1}^\nu a_i^{(k)}U[z_i^{(k)},\lambda_i^{(k)}]\right)\right\|_{L^p(\mathbb{R}^n)}\rightarrow 0,\quad \text{as}\ k\rightarrow \infty.$$
\end{theorem}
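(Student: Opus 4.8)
The plan is to prove the decomposition by induction on $\nu$, extracting one Talenti bubble at a time by a concentration--compactness scheme, following Struwe \cite{S1984} and Mercuri--Willem \cite{MW2010}. The starting and stopping ingredient is the following dichotomy for a sequence $\{w_k\}\subset\dot W^{1,p}(\mathbb R^n)$ of nonnegative \emph{almost-solutions}, i.e.\ with $\sup_k\|Dw_k\|_{L^p}<\infty$ and $\|\Delta_p w_k+|w_k|^{p^*-2}w_k\|_{W^{-1,q}}\to0$: testing the equation against $w_k$ gives $\|Dw_k\|_{L^p}^p=\|w_k\|_{L^{p^*}}^{p^*}+o(1)$, so by \eqref{p-Sob-inequ} either $\|Dw_k\|_{L^p}\to0$ (strong convergence to $0$) or $\liminf_k\|Dw_k\|_{L^p}^p\ge S^n$. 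In the second case one runs the concentration function $Q_k(r)=\sup_{z\in\mathbb R^n}\int_{B_r(z)}|w_k|^{p^*}$, fixes a small dimensional threshold $\varepsilon_0$, and picks $\lambda_k>0$, $z_k\in\mathbb R^n$ with $\int_{B_{1/\lambda_k}(z_k)}|w_k|^{p^*}=\varepsilon_0$; since the rescaling $\tilde w_k(x)=\lambda_k^{-(n-p)/p}w_k(z_k+x/\lambda_k)$ preserves $\|Dw_k\|_{L^p}$, $\|w_k\|_{L^{p^*}}$ and the $W^{-1,q}$-size of the defect, along a subsequence $\tilde w_k\rightharpoonup\sigma\ge0$ in $\dot W^{1,p}$.

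The heart of the matter is that $\sigma$ is a \emph{nontrivial} positive solution of \eqref{EL-equ}. To see $\sigma$ solves \eqref{EL-equ}, one passes to the limit in $-\Delta_p$: testing $\Delta_p\tilde w_k+|\tilde w_k|^{p^*-2}\tilde w_k$ against $(\tilde w_k-\sigma)\eta$ with cutoffs $\eta$, using Rellich compactness for the lower-order term together with the vanishing defect, and the strict monotonicity of $\xi\mapsto|\xi|^{p-2}\xi$, upgrades weak to strong convergence $D\tilde w_k\to D\sigma$ in $L^p_{\mathrm{loc}}$ (hence a.e.\ along a further subsequence), after which $-\Delta_p\tilde w_k\to-\Delta_p\sigma$ in the sense of distributions. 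That $\sigma\ne0$ is forced by the normalization $\int_{B_1}|\tilde w_k|^{p^*}=\varepsilon_0$: if $\sigma=0$ then $\tilde w_k\to0$ in $L^{p^*}_{\mathrm{loc}}$, and Lions' lemma together with the smallness of $Q_k(1)$ would give $\|D\tilde w_k\|_{L^p}\to0$, contradicting $\liminf\|D\tilde w_k\|_{L^p}^p\ge S^n$. Being a nonnegative nontrivial $\dot W^{1,p}$-solution of \eqref{EL-equ}, $\sigma$ is positive by the strong maximum principle, so by the classification of \cite{A1976,CNV2004,T1976} it is a Talenti bubble $aU[\bar z,\bar\lambda]$; in particular $\|D\sigma\|_{L^p}^p=\|\sigma\|_{L^{p^*}}^{p^*}=S^n$ by \eqref{norm-value}.

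For the inductive step, set $v_k=\tilde w_k-\sigma\rightharpoonup0$. Using the a.e.\ convergence of $\tilde w_k$ and of $D\tilde w_k$, Brezis--Lieb-type splittings give the energy identities $\|D\tilde w_k\|_{L^p}^p=\|Dv_k\|_{L^p}^p+\|D\sigma\|_{L^p}^p+o(1)$ and $\|\tilde w_k\|_{L^{p^*}}^{p^*}=\|v_k\|_{L^{p^*}}^{p^*}+\|\sigma\|_{L^{p^*}}^{p^*}+o(1)$, together with the operator splittings $\big\||D\tilde w_k|^{p-2}D\tilde w_k-|Dv_k|^{p-2}Dv_k-|D\sigma|^{p-2}D\sigma\big\|_{L^q}\to0$ and its analogue for $|\cdot|^{p^*-2}(\cdot)$ in $(L^{p^*})'$ (each deduced from the pointwise inequality $\big||a+b|^{s-2}(a+b)-|b|^{s-2}b-|a|^{s-2}a\big|\le\varepsilon|b|^{s-1}+C_\varepsilon|a|^{s-1}$ plus an equi-integrability argument). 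Hence $\{v_k\}$ is again a sequence of almost-solutions with $\|Dv_k\|_{L^p}^p=\|Dw_k\|_{L^p}^p-S^n+o(1)$. Iterating this extraction, the remainder energy drops by $S^n$ at each stage while staying $\ge0$; since it starts below $(\nu+\tfrac12)S^n$ the process must stop after at most $\nu$ stages, at which point the final remainder has energy $<S^n$, so by the dichotomy of the first paragraph it converges strongly to $0$; and since $\|Dw_k\|_{L^p}^p\ge(\nu-\tfrac12)S^n$ by hypothesis, stopping before the $\nu$-th stage would force $\|Dw_k\|_{L^p}^p\to jS^n$ with $j\le\nu-1$, a contradiction. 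Thus exactly $\nu$ bubbles appear; composing all the rescalings back to the original variables yields the claimed $\nu$-tuples, once one also records the asymptotic orthogonality of the extracted bubbles (the consecutive concentration scales satisfy $\lambda_i^{(k)}/\lambda_{i+1}^{(k)}\to0$ or $\infty$, with the corresponding separation of centers), which follows from the construction of $\lambda_k,z_k$ and $v_k\rightharpoonup0$ at each stage.

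I expect the genuine difficulties to be the two ingredients special to $p\ne2$: the strong local gradient convergence $D\tilde w_k\to D\sigma$ --- that is, the $(S_+)$-property of $-\Delta_p$ in this ``variational-with-defect'' setting, which is what legitimizes passing to the limit in the equation --- and the Brezis--Lieb-type decomposition of the vector field $|Du|^{p-2}Du$ and of the nonlinearity $|u|^{p^*-2}u$ in the dual norms with vanishing error. For $p=2$ both are linear and immediate; for general $1<p<n$ they rest on pointwise convexity and monotonicity inequalities for $\xi\mapsto|\xi|^{p-2}\xi$ combined with careful equi-integrability. The remaining bookkeeping --- tracking the rescalings and verifying the mutual orthogonality of the bubbles so that the energies and the operators add up with no interaction --- is routine but lengthy.
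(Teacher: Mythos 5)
This theorem is not proved in the paper; it is a restatement of results of Struwe \cite{S1984} (for $p=2$) and Mercuri--Willem \cite{MW2010} (for general $1<p<n$), which the paper cites and uses as a black box. Your sketch follows exactly the concentration--compactness scheme of those references and is essentially correct: the dichotomy from testing against $w_k$, the rescaling by the concentration function, the $(S_+)$-type argument giving strong local gradient convergence so that the weak limit solves \eqref{EL-equ}, the classification of the limit as a Talenti bubble, and the Brezis--Lieb splittings in both the energy and the dual norms that let you iterate. One small logical slip in the $\sigma\neq 0$ step: $\sigma=0$ does not by itself give $\tilde w_k\to 0$ in $L^{p^*}_{\mathrm{loc}}$, since the embedding $\dot W^{1,p}\hookrightarrow L^{p^*}$ is critical and not locally compact; the correct order is to use Rellich to get $\tilde w_k\to 0$ in $L^{p}_{\mathrm{loc}}$, feed this together with the smallness of $\sup_z\int_{B_1(z)}|\tilde w_k|^{p^*}$ into a Caccioppoli estimate obtained by testing the (almost-)equation against $\eta^p\tilde w_k$, conclude $\|D\tilde w_k\|_{L^p}\to 0$ by a covering argument, and only then derive $\tilde w_k\to 0$ in $L^{p^*}$, contradicting the normalization. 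With that reordering, the sketch is a faithful outline of the Mercuri--Willem proof.
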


Similar to the stability of the Sobolev energy, there is considerable interest in quantifying Theorem~\ref{Struwe-decom} for its applications in geometry, particularly in the context of Yamabe flows. When $p=2$, the pioneering work by Ciraolo, Figalli, Maggi \cite{CFM2018} provided a sharp quantitative estimate of Struwe's decomposition in the case where  $\nu=1$. More precisely, they proved  that 
 \begin{equation*}
     \|Du-a_1DU[z,\lambda]\|_{L^2(\mathbb{R}^n)}\le C \|\Delta u+u|u|^{\frac{4}{n-2}}\|_{W^{-1,2}(\mathbb{R}^n)}.
 \end{equation*}
Furthermore, using a localization argument and by computing the interactions between bubbling phenomena, Figalli and Glaudo \cite{FG2020} demonstrated that the linear control obtained in the single bubble case also extends to the multiple bubbles case when $3\le n\le 5$, i.e. 
 \begin{equation*}
\|D u-\sum_{i=1}^{\nu}a_iDU[z_i,\lambda_i]\|_{L^2(\mathbb{R}^n)}\leq C\|\Delta u+u|u|^{\frac{4}{n-2}}\|_{W^{-1,\,2}(\mathbb{R}^n)}.
\end{equation*}
However, they pointed out that, when $n\ge 6$, one cannot expect such a linear control anymore. Despite this challenge, the problem was eventually resolved by the remarkable work of Deng, Sun, and Wei \cite{DSW2021}. They employed a finite-dimensional reduction method to overcome the difficulties associated with higher dimensions and provided a comprehensive solution to the problem in this range: 
\begin{equation*}
\|D u-\sum_{i=1}^{\nu}a_iD U[z_i,\lambda_i]\|_{L^2(\mathbb{R}^n)}\leq C(n,\,p)
\begin{cases}
  \Gamma|\log \Gamma|^{\frac{1}{2}}, & n=6, \\
  \Gamma^{\frac{n+2}{2(n-2)}}, & n\geq 7,
\end{cases}
\end{equation*}
where $\Gamma=\|\Delta u+u|u|^{\frac{4}{n-2}}\|_{W^{-1,\,2}(\mathbb{R}^n)}.$

However, for the case $1<p<n$, a quantified version of Theorem~\ref{Struwe-decom} remained elusive. Indeed, compared to the case $p=2,$ three main challenges arise:\\
\textbf{Lack of Hilbert Space Structure:} When $p=2,$ the Sobolev space $\dot{W}^{1,2}(\mathbb{R}^n)$ is a Hilbert space, allowing one to find $v$ such that $u-v$ is orthogonal to the tangent space of $\mathcal{M}$ at $v$ via projection. However, for general $p,$ the Sobolev space $\dot{W}^{1,p}(\mathbb{R}^n)$ is merely a Banach space. To construct a Hilbert space structure, it is crucial to give a suitable notion of orthogonality. This issue has been addressed in \cite[Section 2]{FN2019}. Additionally, even if $\|Du-Dv\|_{L^p(\mathbb{R}^n)}\le \hat{\epsilon}$ for any $\hat{\epsilon}>0,$ it does not necessarily follow that $v$ is orthogonal to the tangent space of $\mathcal{M}$ at $v.$ This difficulty was overcome in  \cite[Lemma 4.1]{FZ2022}, which ensures the existence of the desired $v\in \mathcal{M}$ by introducing a new minimization principle.\\
\textbf{Nonlinearity of the $p$-Laplacian:}       When $p=2,$ the $p$-Laplacian reduces to the classical Laplacian, enabling a complete decomposition of the main term and the error term due to its linearity. However, for $1<p<n,$ the $p$-Laplacian is nonlinear, which poses challenges in expanding the term $|D u|^{p-2}D u\cdot D (u-v).$ When $p\ge 2,$ as noted in \cite[Remark 1.17]{ZZ2023}, a standard Taylor expansion yields the sharp stability estimates only under certain restrictions on $|Du-Dv|.$ For $p<2,$ as explained in \cite[Section 1.3]{FZ2022}, the $\dot{W}^{1,p}$ norm does not control any weighted $\dot{W}^{1,2}$ norm, preventing a second order expansion of $|D u|^{p-2}D u\cdot D (u-v).$ Moreover, for $p\le \frac{2n}{n+2},$ the $L^{p^*}$ norm fails to control any weighted $L^2$ norm, making the expansion of $|u|^{p^*-2}u(u-v)$ more challenging. These difficulties compel us to find new approaches for expanding $|D u|^{p-2}D u\cdot D (u-v)$ in regimes $p\in (1,2)$ and $p\in [2,n),$ as well as for expanding $|u|^{p^*-2}u(u-v)$ in $p\in (1,\frac{2n}{n+2}]$ and $\ p\in (\frac{2n}{n+2},n).$\\
\textbf{Compactness and Spectral Analysis:} For $p=2,$ compactness results such as the Rellich--Kondrachov Theorem and the spectral analysis of the classical Laplacian are readily available. However, for the case $1<p<n$, extra techniques are required. Inspired by \cite[Lemma 2.1, Lemma 2.4]{FZ2022}, we establish new vectorial inequalities by replacing original weight $|x|^{p-2}$ with suitable $\omega_j\ (j=1,\cdots, 4)$ as in Lemma \ref{key-ineq-p-lem} and Lemma \ref{key-ineq-p*-lem}. Then, to get the sharp stability results, it is crucial to refine the corresponding spectral gap inequalities under this small perturbation. One of the main difficulties in this process is to establish new compact embedding theorems. Fortunately, in our case, the compactness results from \cite[Proposition 3.1, Lemma 3.4, Corollary 3.5]{FZ2022} provide a foundation for deriving the desired spectral gap-type estimates, see Proposition \ref{sepctrum}.

In this manuscript, we focus on studying the stability of the Euler--Lagrange equation associated with the  $p$-Sobolev inequality when $\nu=1$.


\begin{theorem}\label{main-result}
     There exists  $\delta=\delta(n,p)>0$ small enough, such that the following statement holds: If $u\in \dot{W}^{1,p}(\mathbb{R}^n)$ satisfies $\|D u-D\tilde{U}\|_{L^p(\mathbb{R}^n)}\leq \delta,$ where $\tilde{U}\in \mathcal{M},$ then there exist $v\in \mathcal{M}$ and $C=C(n,p)>0,$ such that $u-v$ is orthogonal to $T_v\mathcal{M}$ and 
    \begin{equation*}
   \|Du-Dv\|_{L^p(\mathbb{R}^n)}^{\max\{1,p-1\}}\le C \|P(u)\|_{W^{-1,q}(\mathbb{R}^n)},
\end{equation*}
where $P(u)=-{\rm div}(|Du|^{p-2}Du)-|u|^{p^*-2}u.$
\end{theorem}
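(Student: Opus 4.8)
The plan is to linearize the equation around $v$ and reduce the statement to a spectral-gap estimate, following the strategy of \cite{CFM2018} and \cite{FZ2022} but working directly with the nonlinear $p$-Laplacian. First, set $\varphi := u - v$; by Theorem~\ref{main-result}'s choice of $v$ (whose existence is guaranteed by the minimization principle of \cite[Lemma 4.1]{FZ2022}), we may assume $\|D\varphi\|_{L^p} =: \epsilon$ is small and $\varphi \perp T_v\mathcal{M}$. The idea is to test the defect $P(u)$ against $\varphi$ itself: write
\begin{equation*}
\langle P(u),\varphi\rangle = \int_{\mathbb{R}^n} |Du|^{p-2}Du\cdot D\varphi\,dx - \int_{\mathbb{R}^n} |u|^{p^*-2}u\,\varphi\,dx,
\end{equation*}
and expand each term around $v$. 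Using the Euler--Lagrange equation $-\Delta_p v = v^{p^*-1}$, the zeroth-order terms combine to zero, so one is left with a quadratic-in-$\varphi$ main part plus higher-order remainders. The crux is to show the quadratic part is bounded below by $c\,\epsilon^{\max\{2,p\}}$, while the remainder is $o(\epsilon^{\max\{2,p\}})$ or controlled by the same quantity with a small constant; this then gives $\langle P(u),\varphi\rangle \gtrsim \epsilon^{\max\{2,p\}}$, and since $\langle P(u),\varphi\rangle \le \|P(u)\|_{W^{-1,q}}\|D\varphi\|_{L^p} = \|P(u)\|_{W^{-1,q}}\,\epsilon$, dividing by $\epsilon$ yields exactly $\epsilon^{\max\{1,p-1\}} \lesssim \|P(u)\|_{W^{-1,q}}$.

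The expansion must be handled differently in the regimes $p\ge 2$ and $1<p<2$ (and, for the $L^{p^*}$ term, in $p \le \tfrac{2n}{n+2}$ versus $p > \tfrac{2n}{n+2}$), precisely as flagged in the introduction. For $p\ge 2$ one uses a second-order Taylor expansion of $\xi \mapsto |\xi|^{p-2}\xi$ with an integral remainder, producing a weighted Hessian form $\int \omega_j\,|D\varphi|^2$ with weight $\omega_j$ comparable to $|Dv|^{p-2}$ away from degeneracy; for $1<p<2$, since $\dot W^{1,p}$ controls no weighted $\dot W^{1,2}$ norm, one instead relies on the new vectorial inequalities of Lemma~\ref{key-ineq-p-lem} and Lemma~\ref{key-ineq-p*-lem}, which give pointwise lower bounds on $|Du|^{p-2}Du\cdot D\varphi - |Dv|^{p-2}Dv\cdot D\varphi - (p-1)|Dv|^{p-2}|D\varphi|^2$ in terms of the appropriate weights $\omega_1,\dots,\omega_4$. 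Combining these, the main quadratic form becomes $Q_v(\varphi) := (p-1)\int \omega\,|D\varphi|^2 - (p^*-1)\int \omega'\,v^{p^*-2}\varphi^2$ (schematically), and the content of the lower bound is the \emph{spectral gap}: $Q_v(\varphi) \ge c\,\|D\varphi\|_{L^p}^{\max\{2,p\}}$ for $\varphi$ orthogonal to $T_v\mathcal{M}$. This is Proposition~\ref{sepctrum}, whose proof rests on the compact embeddings of \cite[Proposition 3.1, Lemma 3.4, Corollary 3.5]{FZ2022} together with a contradiction/compactness argument identifying the kernel of the limiting operator with $T_v\mathcal{M}$.

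I expect the main obstacle to be controlling the \emph{cross and higher-order error terms} uniformly in $p$, especially the interaction between the two nonlinearities when $p$ is close to $1$: there the exponent $p^*-2$ can be negative and small, so $|u|^{p^*-2}u\,\varphi$ is only borderline integrable against the available norms, and one cannot naively Taylor-expand. The remedy is to split the integration domain according to whether $|\varphi|$ is small or large compared to $v$ (a "good set / bad set" decomposition), estimate the bad set by Hölder and the smallness of $\epsilon$, and on the good set use the refined inequalities of Lemma~\ref{key-ineq-p*-lem}; making the resulting error genuinely $o(\epsilon^{\max\{2,p\}})$ rather than merely $O(\epsilon^{\max\{2,p\}})$ with a possibly large constant is the delicate point, and it is what forces the perturbed weights $\omega_j$ (rather than the unperturbed $|Dv|^{p-2}$) into the spectral analysis. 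A secondary technical point is verifying that the orthogonality condition $\varphi \perp T_v\mathcal{M}$ produced by the minimization principle is exactly the one under which Proposition~\ref{sepctrum} applies; this should follow by differentiating the minimization identity, as in \cite{FZ2022}.
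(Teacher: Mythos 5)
Your proposal follows essentially the same route as the paper's proof: use the minimization principle (Lemma~\ref{orth-V-lem}, adapted from \cite[Lemma 4.1]{FZ2022}) to select $v$ with $u-v\perp T_v\mathcal{M}$, test $P(u)$ against $u-v$, expand both nonlinearities pointwise via the vectorial inequalities of Lemma~\ref{key-ineq-p-lem} and Lemma~\ref{key-ineq-p*-lem}, absorb the resulting quadratic terms through the perturbed spectral gap (Proposition~\ref{sepctrum}), and then read off the exponent from the leftover coercive remainder ($\min\{\epsilon^p|D\varphi|^p,\epsilon^2|Dv|^{p-2}|D\varphi|^2\}$ when $p<2$, giving $\epsilon^2$ via \cite[(4.5)]{FZ2022}, and $c_2\epsilon^p\|D\varphi\|_{L^p}^p$ when $p\ge 2$), with the higher-order error $C_2\epsilon^{p^*}\int|\varphi|^{p^*}$ being $o(\epsilon^{\max\{2,p\}})$ simply because $p^*>\max\{2,p\}$. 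The only small inaccuracy in your sketch is framing the spectral gap as directly yielding $Q_v(\varphi)\ge c\,\epsilon^{\max\{2,p\}}$: Proposition~\ref{sepctrum} instead compares the two weighted quadratic forms with a definite margin $\lambda S^{-p}$, and it is the \emph{extra} positive terms built into Lemma~\ref{key-ineq-p-lem} (the $c_1\min\{\cdot\}$ or $c_2|y|^p$ piece) that survive and give the coercivity — but this is exactly the structure you anticipate with your ``perturbed weights'' remark, so the approach is in substance the paper's own.
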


Our theorem is also partially motivated by a problem raised in the pre-printed online version of \cite[Remark 1.17]{ZZ2023}. In this intriguing manuscript, the authors studied the stability of the Euler--Lagrange equation associated with the Caffarelli-Kohn-Nirenberg inequality, which can be viewed as a weighted version of the Sobolev inequality. We believe that with suitable modifications, our method has the potential to address their problem as well.  Moreover,  the sharpness of the exponent $1$ when $p<2$ in Theorem~\ref{main-result} follows from a smooth perturbation of $v$, and that of the exponent $p-1$ when $p\ge 2$  follows similarly as the one in \cite[Remark 1.18]{ZZ2023}.

Combining Theorem \ref{main-result} with Theorem \ref{Struwe-decom}, similar to \cite[Corollary 3.4]{FG2020}, we can directly derive the following corollary.
\begin{corollary}
    Let $n\ge 2.$ For any nonnegative function $u\in \dot{W}^{1,p}(\mathbb{R}^n)$ such that  $$\frac{1}{2}S^n\le \int_{\mathbb{R}^n}|Du|^p dx \le \frac{3}{2}S^n,$$
    there exists $C=C(n,p)>0,$ such that
    \begin{equation*}
   \| Du-Dv\|_{L^p(\mathbb{R}^n)}^{\max\{1,p-1\}}\le C\|P(u)\|_{W^{-1,\,q}(\mathbb{R}^n)}.
\end{equation*}
\end{corollary}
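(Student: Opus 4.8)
The plan is to reduce the global statement to the local one, Theorem~\ref{main-result}, by a contradiction-and-compactness argument combined with the qualitative Struwe decomposition (Theorem~\ref{Struwe-decom}) in the case $\nu=1$. Suppose the claimed inequality fails. Then there is a sequence $\{u_k\}\subset\dot W^{1,p}(\mathbb R^n)$ of nonnegative functions with $\tfrac12 S^n\le \int_{\mathbb R^n}|Du_k|^p\,dx\le \tfrac32 S^n$ such that, writing $\Gamma_k:=\|P(u_k)\|_{W^{-1,q}(\mathbb R^n)}$ and letting $v_k\in\mathcal M$ be (for each $k$) the element realizing the orthogonal decomposition of $u_k$ furnished by Theorem~\ref{main-result} whenever $u_k$ is close enough to $\mathcal M$, we have
\begin{equation*}
\|Du_k-Dv_k\|_{L^p(\mathbb R^n)}^{\max\{1,p-1\}} > k\,\Gamma_k .
\end{equation*}
Here the quantity on the left is uniformly bounded (both $Du_k$ and $Dv_k$ have bounded $L^p$ norms by \eqref{norm-value} and the energy constraint), so necessarily $\Gamma_k\to 0$.

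Next I would invoke Theorem~\ref{Struwe-decom} with $\nu=1$: since $\tfrac12 S^n\le \int|Du_k|^p\le \tfrac32 S^n$ lies in the window $(\nu-\tfrac12)S^n\le\int|Du_k|^p\le(\nu+\tfrac12)S^n$ for $\nu=1$, and $\Gamma_k\to 0$, the decomposition theorem produces $z^{(k)}\in\mathbb R^n$ and $\lambda^{(k)},a^{(k)}>0$ with
\begin{equation*}
\big\|D\big(u_k-a^{(k)}U[z^{(k)},\lambda^{(k)}]\big)\big\|_{L^p(\mathbb R^n)}\longrightarrow 0 .
\end{equation*}
In particular $\operatorname{dist}(u_k,\mathcal M)\to 0$ in the $\dot W^{1,p}$ sense, so for $k$ large $u_k$ satisfies the smallness hypothesis $\|Du_k-D\tilde U\|_{L^p}\le\delta$ of Theorem~\ref{main-result} with $\tilde U=a^{(k)}U[z^{(k)},\lambda^{(k)}]$ (note the energy bounds force $a^{(k)}$ to stay in a fixed compact subset of $(0,\infty)$, so the normalization difference between $\tilde U$ and an element of $\mathcal M$ is controlled). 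Applying Theorem~\ref{main-result} for each such $k$ then gives
\begin{equation*}
\|Du_k-Dv_k\|_{L^p(\mathbb R^n)}^{\max\{1,p-1\}}\le C(n,p)\,\|P(u_k)\|_{W^{-1,q}(\mathbb R^n)}=C(n,p)\,\Gamma_k ,
\end{equation*}
which contradicts $\|Du_k-Dv_k\|_{L^p}^{\max\{1,p-1\}}>k\Gamma_k$ for $k>C(n,p)$. This proves the corollary.

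The only genuinely delicate point is matching the hypotheses of the two theorems at the interface: Theorem~\ref{Struwe-decom} outputs an approximating bubble $a^{(k)}U[z^{(k)},\lambda^{(k)}]$, whereas Theorem~\ref{main-result} is phrased with $\tilde U\in\mathcal M$, i.e. with the normalization \eqref{norm-value} built in, and then outputs the \emph{orthogonalized} representative $v_k\in\mathcal M$. One must check that the coefficient $a^{(k)}$ stays bounded away from $0$ and $\infty$ — which follows because $\|Du_k-D(a^{(k)}U[z^{(k)},\lambda^{(k)}])\|_{L^p}\to0$ forces $(a^{(k)})^p\|DU[z^{(k)},\lambda^{(k)}]\|_{L^p}^p=(a^{(k)})^pS^n$ to converge to $\int|Du_k|^p\in[\tfrac12 S^n,\tfrac32 S^n]$ — so that the $\dot W^{1,p}$-distance from $u_k$ to the full manifold $\mathcal M$ (not just to the scaled bubble) also tends to $0$, putting us in the regime where Theorem~\ref{main-result} applies. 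Everything else is the standard soft compactness bookkeeping, exactly as in \cite[Corollary 3.4]{FG2020}.
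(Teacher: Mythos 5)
Your argument is exactly the one the paper intends: the paper gives no explicit proof, only the remark that the corollary follows from Theorem \ref{Struwe-decom} combined with Theorem \ref{main-result} as in \cite[Corollary 3.4]{FG2020}, and your contradiction-and-compactness reduction is that argument. One step, however, is not justified as written: at the interface you argue that the energy window forces $a^{(k)}$ to stay in a compact subset of $(0,\infty)$, and from this you conclude that the $\dot W^{1,p}$-distance from $u_k$ to $\mathcal M$ tends to $0$. Boundedness of $a^{(k)}$ alone does not give this; e.g.\ if $a^{(k)}\equiv (3/2)^{1/p}a_0$ (with $a_0$ the normalizing constant of $\mathcal M$), the energy constraint is satisfied but $a^{(k)}U[z^{(k)},\lambda^{(k)}]$ stays at distance of order one from $\mathcal M$. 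What you actually need is $a^{(k)}\to a_0$. This does hold, and can be obtained in either of two ways: (i) the Mercuri--Willem profiles are genuine nonnegative solutions of \eqref{EL-equ}, hence already elements of $\mathcal M$, so one may take $a^{(k)}=a_0$ from the start; or (ii) since $P$ is continuous from $\dot W^{1,p}$ into $W^{-1,q}$ and $\Gamma_k\to0$, the limit bubble must solve \eqref{EL-equ}, which by the classification of positive solutions forces $a^{(k)}\to a_0$. With that one-line repair the proof is complete.
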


The proof of Theorem~\ref{main-result} shares similarities with the one employed in \cite{FZ2022}, but it includes non-trivial modifications tailored to our specific problem. Specifically, instead of relying on \cite[Lemma 2.1]{FZ2022}, we establish a series of new vectorial inequalities, denoted as Lemma \ref{key-ineq-p-lem}, which are compatible with our problem. These inequalities provide an improved version of the vectorial inequalities presented in \cite[Lemma 4.2]{ZZ2023}. Additionally, we prove a result concerning the new spectral gap associated with these vectorial inequalities, as stated in Proposition \ref{sepctrum}. These modifications and additions are crucial for deriving our main theorem and addressing the stability of the Euler--Lagrange equation in the context of the $p$-Sobolev inequality.

While completing this manuscript, we became aware of the recent preprint by Ciraolo and Gatti \cite{CG2025}, which provides an alternative proof of the stability result in our setting. Although their exponent is not sharp, their method appears to have potential for generalization to other contexts, such as the anisotropic setting.

This paper is organized as follows.  We establish a series of vectorial inequalities in Section 2, and prove the corresponding spectral gap estimates Section 3. In the last section, we prove our main result Theorem \ref{main-result}.

\medskip

\noindent {\bf Notation}. In our  estimates, we often express positive constants as $C(\cdot)$ and $c(\cdot)$, with the parentheses enclosing all the parameters upon which the constant depends. Typically, we reserve  $C$ for constants greater than $1$ and $c$ for constants less than  1. When the constant is absolute, we omit the parentheses and simply write $C$ or $c$. It is important to note that the value of  $C(\cdot)$ may differ in various instances, even within a single chain of inequalities.

\section{Sharp Vector inequalities in Euclidean Spaces}

This section aims at establishing sharp vector inequalities, following a similar idea of \cite[Section 2]{FZ2022}: For fixed $x\in \mathbb{R}^n,$ by introducing weights $\omega_j\ (j=1,\cdots,4)$ related to the sizes of both $|x|$ and $|x+y|,$ we construct a quadratic-type expression plus a positive extra term that control $|x+y|^{p-2}(x+y)\cdot x-|x|^{p-2}x\cdot y$ from below, which improves \cite[Lemma 4.2]{ZZ2023}.

\begin{lemma}\label{key-ineq-p-lem}
    Let $x,y\in \mathbb{R}^n$ and $\kappa>0.$  The following inequalities hold.
    \begin{enumerate}
        \item \ For $1<p<2,$ there exists a constant $c_1=c_1(p,\kappa)>0$ such that 
        \begin{align}\label{key-ineq-p-lem-r1}
            |x+y|^{p-2}(x+y)\cdot y\ge &\ |x|^{p-2}x\cdot y+(1-\kappa)\textcolor{blue}{\omega_1}|y|^2+(p-2)(1-\kappa)\textcolor{blue}{\omega_2}\big(|x|-|x+y|)^2\nonumber\\
            &+c_1\min\{|y|^p,|x|^{p-2}|y|^2\}\big). 
        \end{align}
         where
        $$ \omega_1=\omega_1(x,x+y):=
            \left\{
            \begin{array}{ll}
            |x+y|^{p-2} & \text{if}  \  |x|\le|x+y|\\
            |x|^{p-2} & \text{ if }  \  |x+y|\le |x|
            \end{array}
            \right.$$
            and
        $$ \omega_2=\omega_2(x,x+y):=
            \left\{
            \begin{array}{ll}
            \frac{|x+y|^{p-1}}{(2-p)|x+ y|+(p-1)|x|} & \text{if}  \  |x|\le |x+y|\\
            |x|^{p-2} & \text{ if }  \  |x+y|\le |x|
            \end{array}
            \right..$$
        \item \ For $p\ge 2,$  there exist  constants $c_2=c_2(p,\kappa)>0$ and $c_3=c_3(p)>0$ such that
        \begin{align}\label{key-ineq-p-lem-r2}
           |x+y|^{p-2}(x+y)\cdot y\ge &\ |x|^{p-2}x\cdot y+(1-\kappa)\omega_3|y|^2+(p-2)(1-\kappa)\omega_4\big(|x|-|x+y|)^2\nonumber\\
           &+c_2|y|^p.
        \end{align}
        where
        $$ \omega_3=\omega_3(x,x+y):=
            \left\{
            \begin{array}{ll}
            |x|^{p-2}& \text{if}  \  |x|\le |x+y|\\
            \frac{|x+y|^{p-1}}{|x|} & \text{ if }  \  c_3^{\frac{1}{p-1}}|x|\le |x+y|\le |x|\\
           c_3|x|^{p-2}& \text{if} \ |x+y|\le c_3^{\frac{1}{p-1}}|x|
            \end{array}
            \right.$$
            and
        $$ \omega_4=\omega_4(x,x+y):=
            \left\{
            \begin{array}{ll}
            |x|^{p-2}& \text{if}  \  |x|\le |x+y|\\
           \frac{|x+y|^{p-1}}{|x|} & \text{ if }  \  |x+y|\le |x|
           \end{array}
            \right..$$
        \end{enumerate}
        Moreover, for all $x,y \in \mathbb R^n$ it holds \begin{equation}\label{lower-estimate}
            c_3|x|^{p-2}\le \omega_3(x,x+y).
        \end{equation}
\end{lemma}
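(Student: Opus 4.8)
The plan is to prove the vectorial inequalities of Lemma~\ref{key-ineq-p-lem} by a careful case analysis based on the relative sizes of $|x|$ and $|x+y|$, reducing, in each regime, to a one-dimensional comparison. First I would record the exact Taylor-type identity: writing $\Phi(t)=|x+ty|^{p-2}(x+ty)$, one has
\begin{equation*}
|x+y|^{p-2}(x+y)\cdot y-|x|^{p-2}x\cdot y=\int_0^1 \frac{d}{dt}\big(|x+ty|^{p-2}(x+ty)\big)\cdot y\,dt,
\end{equation*}
and the integrand equals $|x+ty|^{p-2}|y|^2+(p-2)|x+ty|^{p-4}\big((x+ty)\cdot y\big)^2$. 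Splitting $y$ into components parallel and orthogonal to $x+ty$ and using $|(x+ty)\cdot y|\le |x+ty|\,|y|$ gives a first crude bound; the sharp statement requires instead replacing $((x+ty)\cdot y)^2$ by $(|x+ty|')^2|x+ty|^2$ where $\frac{d}{dt}|x+ty|$ appears, so that the $(p-2)$-term is genuinely controlled by $(|x|-|x+y|)^2=\big(\int_0^1 \frac{d}{dt}|x+ty|\,dt\big)^2$ via Jensen/Cauchy--Schwarz. The point is to choose the weights $\omega_1,\dots,\omega_4$ as the correct monotone envelopes of $|x+ty|^{p-2}$ over $t\in[0,1]$: when $|x|\le|x+y|$ the map $t\mapsto|x+ty|$ need not be monotone, but $|x+ty|^{p-2}$ is minimized by the larger endpoint when $p<2$, which dictates $\omega_1=|x+y|^{p-2}$, and symmetrically for $\omega_3$ when $p\ge2$; the more delicate $\omega_2,\omega_4$ arise from optimizing the combined $|y|^2$ and $(|x|-|x+y|)^2$ coefficients, which is where the rational expression $\frac{|x+y|^{p-1}}{(2-p)|x+y|+(p-1)|x|}$ comes from as the harmonic-type mean produced by integrating $|x+ty|^{p-2}$ against the linear interpolation of the radii.

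Next I would handle the positive ``extra term''. For $p\ge2$ the convexity of $\xi\mapsto|\xi|^p$ already yields a clean lower bound $c(p)|y|^p$ for the full difference minus its quadratic part; this is classical (e.g. the inequality $|x+y|^{p-2}(x+y)\cdot y\ge |x|^{p-2}x\cdot y+c|y|^p$) and after subtracting off the $\kappa$-deficient quadratic terms one absorbs the loss into $c_2|y|^p$ by taking $c_2$ small depending on $\kappa$. For $1<p<2$ the gain degenerates when $|y|$ is large relative to $|x|$, which is why the extra term is $\min\{|y|^p,|x|^{p-2}|y|^2\}$: in the regime $|y|\lesssim|x|$ one Taylor-expands to second order and the third-order remainder is $\sim|x|^{p-3}|y|^3\ll|x|^{p-2}|y|^2$, so one must extract the gain from the strict inequality in the convexity/monotonicity estimate at \emph{second} order — concretely from the strict positivity of $|x+ty|^{p-2}-\omega_1$ on a set of $t$ of definite measure, or from a compactness/normalization argument after scaling $x,y$ so that $|x|=1$. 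In the regime $|y|\gtrsim|x|$ the difference is comparable to $|y|^p$ up to the $|x|^{p-2}x\cdot y$ term, and a separate elementary estimate gives the $|y|^p$ gain. The two regimes are glued by the $\min$.

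Finally, the displayed extra claim \eqref{lower-estimate}, $c_3|x|^{p-2}\le\omega_3(x,x+y)$, is immediate from the definition of $\omega_3$: in the first branch $\omega_3=|x|^{p-2}\ge c_3|x|^{p-2}$ since $c_3=c_3(p)<1$; in the third branch $\omega_3=c_3|x|^{p-2}$ by definition; and in the middle branch $\omega_3=|x+y|^{p-1}/|x|\ge (c_3^{1/(p-1)}|x|)^{p-1}/|x|=c_3|x|^{p-2}$ using the lower bound $|x+y|\ge c_3^{1/(p-1)}|x|$ defining that branch. Thus \eqref{lower-estimate} holds for every $x,y\in\mathbb{R}^n$ with the same $c_3$ appearing in part~(2).

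\textbf{Main obstacle.} I expect the hard part to be the case $1<p<2$ with $|x+y|\le|x|$, and especially tracking the extra term $c_1\min\{|y|^p,|x|^{p-2}|y|^2\}$ uniformly: one has to show the quadratic lower bound with coefficients $\omega_1,\omega_2$ is \emph{strict} by a quantitative margin that does not vanish as $y\to0$ after rescaling, which rules out a naive Taylor argument and seems to require either an explicit second-order remainder estimate with the right sign, or a two-scale split ($|y|\le\eta|x|$ vs.\ $|y|>\eta|x|$) with a compactness argument on the unit sphere in the first piece; reconciling the constant $c_1$ across the two pieces and across the further subcase $|x+y|\le|x|$ vs.\ $|x|\le|x+y|$ is the delicate bookkeeping. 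The $p\ge2$ statement and \eqref{lower-estimate} are comparatively routine.
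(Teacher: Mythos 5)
Your verification of \eqref{lower-estimate} is correct and matches the paper. However, your proposed route for the main inequalities departs from the paper's and, as sketched, has a genuine gap in the case $1<p<2$.

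The paper does not use the integral representation $\int_0^1 \frac{d}{dt}\bigl(|x+ty|^{p-2}(x+ty)\bigr)\cdot y\,dt$. Instead, in Step~1 it proves the quadratic inequality \emph{with $\kappa=0$ and no remainder} by a direct algebraic reduction: it divides by $|x|^p$, replaces $x\cdot(x+y)$ by $|x|\,|x+y|$ (one application of Cauchy--Schwarz, with the correct sign because the coefficient $|x+y|^{p-2}-|x|^{p-2}$ has a known sign in each regime), and reduces each of the four regimes (two signs of $p-2$ times two orderings of $|x|$, $|x+y|$) to a one-variable polynomial-type inequality in $t=|x+y|/|x|$, which is then settled by Young's inequality or elementary calculus on explicit functions $f,g,h,F$. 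Step~2 then introduces the $\kappa$-loss and extracts the positive remainder, essentially by invoking the mechanism of \cite[Lemma 2.1]{FZ2022}, using the auxiliary lower bounds \eqref{nonegative-esti-1}--\eqref{nonnegative-esti-2}.

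The concrete problem with your sketch is the claim that ``the $(p-2)$-term is genuinely controlled by $(|x|-|x+y|)^2$ via Jensen/Cauchy--Schwarz.'' Writing $g(t)=\frac{d}{dt}|x+ty|$ and $w(t)=|x+ty|^{p-2}$, weighted Cauchy--Schwarz gives
\begin{equation*}
\Bigl(\int_0^1 g\,dt\Bigr)^{2}\le \Bigl(\int_0^1 w^{-1}\,dt\Bigr)\Bigl(\int_0^1 w\,g^{2}\,dt\Bigr),
\end{equation*}
i.e.\ a \emph{lower} bound $\int_0^1 w\,g^{2}\,dt\ge (|x|-|x+y|)^{2}\big/\int_0^1 w^{-1}\,dt$. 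But for $1<p<2$ the coefficient $p-2$ is negative, so after multiplying by $(p-2)$ you need the \emph{opposite} inequality, namely an upper bound $\int_0^1 w\,g^{2}\,dt\le\omega_2(|x|-|x+y|)^{2}$. Cauchy--Schwarz does not give this; in fact, a short computation with $x\perp y$ and $|y|\ll|x|$ shows the integral $\int_0^1 w\,g^{2}\,dt$ is of order $|x|^{p-4}|y|^{4}$ while $(|x|-|x+y|)^{2}$ is of order $|x|^{-2}|y|^{4}$, so the two are merely comparable, with a constant that is not $1$. This means the $|y|^{2}$-integral and the $(p-2)$-integral cannot be matched separately against $\omega_1|y|^2$ and $(p-2)\omega_2(|x|-|x+y|)^2$; one must compare the sum, which is exactly what the paper's algebraic reduction to $f(t)\ge 0$ accomplishes. (Your heuristic that $\omega_2$ ``arises from integrating $|x+ty|^{p-2}$ against the linear interpolation of the radii'' also conflates $|x+ty|$ with the linear interpolation $|x|+t(|x+y|-|x|)$; the two coincide only after the Cauchy--Schwarz reduction to a scalar problem, which your sketch never performs.)

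Two further points are underdeveloped relative to the paper. First, for $p\ge 2$ the delicate part is not the classical convexity bound but the determination of the constant $c_3$ in the regime $|x+y|\le c_3^{1/(p-1)}|x|$: the paper needs a continuity argument on the explicit function $F(t,a)$ near $t=0$ to show such a $c_3>0$ exists. Second, for the remainder $c_1\min\{|y|^p,|x|^{p-2}|y|^2\}$ you suggest either a second-order Taylor remainder or a compactness argument on the sphere; the paper does not re-derive this but refers to the corresponding Step (i)-3 and Step (ii)-2 in \cite[Lemma 2.1]{FZ2022}, after establishing the strict nonnegativity estimates \eqref{nonegative-esti-1}--\eqref{nonnegative-esti-2}. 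If you wish to complete a proof along your lines, you would in any case need to produce those quantitative strict-positivity estimates; but the core issue is the direction of the Cauchy--Schwarz step for $p<2$, which as written your argument cannot overcome.
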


\begin{proof}
When $|x|=0,$ it is trivial since both $(1)$ and $(2)$ are reduced to $|y|^p\ge 0.$ Subsequently, we assume that $|x|\ne 0$ and divide the proof into two steps.

\noindent\textbf{Step $1:$}  We first construct a quadratic lower bound of
$|x+y|^{p-2}(x+y)\cdot y- |x|^{p-2}x\cdot y$ for $1<p<n.$

It is divided into two cases. 

\noindent\textbf{Case $(1):$ $1<p<2.$} When $|x+y|\le|x|,$ we claim that
\begin{equation}\label{key-ineq-p-lem-s1}
    |x+y|^{p-2}(x+y)\cdot y\ge |x|^{p-2}x\cdot y+|x|^{p-2}|y|^2+(p-2)|x|^{p-2}(|x|-|x+y|)^2.
\end{equation}
Dividing both sides of \eqref{key-ineq-p-lem-s1} by $|x|^p,$ we rearrange the terms and get 
\begin{equation*}\label{key-ineq-p-lem-s1-2}
    \left(\frac{|x+y|^{p-2}}{|x|^{p-2}}-1\right)\left(\frac{x+y}{|x|}\right)\cdot\left(\frac{x+y}{|x|}-\frac{x}{|x|}\right)\ge (p-2)\left(1-\frac{|x+y|}{|x|}\right)^2.
\end{equation*}
Noting
\begin{equation*}
    \frac{x+y}{|x|}\cdot\frac{x}{|x|}\le \frac{|x+y|}{|x|},
\end{equation*}
it is sufficient to prove that 
\begin{equation}\label{key-ineq-p-lem-s1-3}
    \left(\frac{|x+y|^{p-2}}{|x|^{p-2}}-1\right)\left(\frac{|x+y|^2}{|x|^2}\right)- \left(\frac{|x+y|^{p-2}}{|x|^{p-2}}-1\right)\frac{|x+y|}{|x|}- (p-2)\left(1-\frac{|x+y|}
    {|x|}\right)^2\ge 0.
\end{equation}
Let $t=\frac{|x+y|}{|x|},$ and then $0<t\leq 1.$ Observe that the left hand side of of \eqref{key-ineq-p-lem-s1-3} is equivalent to 
$$t^2 (t^{p - 2} - 1) - [(p - 2) (1 - t)^2 + (t^{p - 2} - 1) t]= \frac{(1 - t) ( (2-p) t + ( p-1) t^2 - t^p)} t.$$
As $\frac{1}{p-1}, \ \frac{1}{2-p}> 1$ and $(p-1)+(2-p)=1$, Young's inequality implies
$$t^p = t^{2(p-1) + (2-p)}\le (p-1)t^{2} + (2-p) t. $$
Then, we reach 
$$ (2-p) t + ( p-1) t^2 - t^p\ge 0 \quad \text{ for } 0<t\le 1,$$
and  \eqref{key-ineq-p-lem-s1-3} follows. Thus, we conclude \eqref{key-ineq-p-lem-s1}.

When $|x|\le|x+y|,$ we claim that
\begin{align}\label{key-ineq-p-lem-s1-4}
    |x+y|^{p-2}(x+y)\cdot y\ge &\ |x|^{p-2}x\cdot y+|x+y|^{p-2}|y|^2\nonumber\\
            &+(p-2)\frac{|x+y|}{(2-p)|x+y|+(p-1)|x|}|x+y|^{p-2}\big(|x|-|x+y|)^2.
\end{align}
Towards this, we write $t=\frac{|x+y|}{|x|}\in [1,+\infty)$ and prove that  
\begin{align*}
  f(t)&:=(t-1)\left(t^{p-2}-(p-2)\frac{t^{p-1}}{(2-p)t+(p-1)}(t-1)-1\right)\\
  &=\frac{t-1}{(2-p)t+(p-1)}[(p-1)t^{p-2}-(p-2)t^p+(p-2)t+1-p]\\
  &=: \frac{t-1}{(2-p)t+(p-1)} g(t)\ge 0,\quad \text{for all}\ t\ge 1.
\end{align*}
Note that $\frac{t-1}{(2-p)t+(p-1)}\ge 0,$ and 
then it is enough to show $g(t)\ge 0$ for all $t\ge 1.$
Indeed, a direct computation yields $$g(1)=0\quad \text{and}\quad g'(t)=(p-2)[(p-1)t^{p-3}-pt^{p-1}+1].$$
Furthermore, write $h(t):=(p-1)t^{p-3}-pt^{p-1}+1,$ and then $h(1)=0$ and $$h'(t)=(p-1)t^{p-4}((p-3)-pt^2)< 0,\quad \forall\ t\ge 1,$$ which implies
$h(t)\le 0$ for any $t\ge 1.$ As $p<2,$ this further gives $g'(t)\ge 0.$ Thus, we derive $g(t)\ge 0$ for any $t\ge 1$ and \eqref{key-ineq-p-lem-s1-4} follows. 

\noindent\textbf{Case $(2):$ $p\ge 2.$} For $p\ge 2$ and $|x|\le |x+y|,$ we show that
\begin{equation}\label{key-ineq-p-lem-s2-1}
|x+y|^{p-2}(x+y)\cdot y-|x|^{p-2}(x+y)\cdot y\ge (p-2)|x|^{p-2}(|x|-|x+y|)^2.  
\end{equation}
Note that this inequality holds trivially when $p=2$, and we may assume $p>2$ in what follows.

Dividing both sides of \eqref{key-ineq-p-lem-s2-1} by $|x|^p$ and letting $t=\frac{|x+y|}{|x|}\ge 1,$ it is sufficient to prove 
$$\frac{t^{p}}{p-1}+\frac{(p-2)t}{p-1}-t^2\ge0.$$ This is a direct consequence of Young's inequality since $\frac{p}{p-1}+\frac{p-2}{p-1}=2$ and $p-1> 1,\ \frac{p-1}{p-2}> 1.$ 

When $|x+y|<|x|,$, we first prove the existence of a constant $c_3:=c_3(p)>0$ such that, when $|x+y|\le  c_3^{\frac{1}{p-1}}|x|,$ 
$$|x+y|^{p-2}(x+y)\cdot y\ge |x|^{p-2}x\cdot y+c_3 |x|^{p-2}|y|^2+(p-2)\frac{|x+y|}{|x|}|x+y|^{p-2}(|x|-|x+y|)^2\ge 0.$$
Since $x\cdot (x+y)\le |x||x+y|$, it suffices to show that, there exists a constant $0<c_3=c_3(p)\le \frac{1}{2},$ for which, when $a=c_3(p),$  it holds
$$F(t,a)=(2-p)t^{p+1}+(2p-3)t^p+(1-p)t^{p-1}-(t-1)^2a+(1-t)\ge 0,\quad \forall\  t=\frac{|x+y|}{|x|}\in \left(0, a^{\frac{1}{p-1}}\right].$$
Indeed, when $a=\frac{1}{3},$ we have 
$$\underset{t\rightarrow 0}{\lim}\ F\left(t,\frac{1}{3}\right)=\frac{2}{3}>0. $$
Then since $F$ is  continuous when $t$ is near the origin, there exists a neighborhood $b=b(p)$ of 0, such that $F(t,\frac{1}{3})>0$ for any $0<t<b.$ In particular, by choosing $c_3=c_3(p)>0$ with $0<c_3^{\frac{1}{p-1}}\le \min\{\frac{1}{3},b\},$ $$F\left(t,\frac{1}{3}\right)>0,\ \forall\ t\in \left(0,c_3^{\frac{1}{p-1}}\right).$$
Notice that $F(t,a)$ is monotonically decreasing with respect to $a,$ and then 
$$F\left(t,c_3^\frac{1}{p-1}\right)\ge F\left(t,\frac{1}{3}\right)>0,\ \forall\ t\in \left(0,c_3^{\frac{1}{p-1}}\right).$$

Finally, when $c_3^{\frac{1}{p-1}}|x|\le |x+y|\le |x|,$ we show that
$$|x+y|^{p-2}(x+y)\cdot y\ge  |x|^{p-2}x\cdot y + |x+y|^{p-1}|x|^{-1}[|y|^2+
(p-2)(|x+y|-|x|)^2]. $$
This is equivalent to
\begin{align*}
    |x+y|^{p} + |x|^{p}  \ge & \  \left[|x+y|^{p-2} + |x|^{p-2} -  2 |x+y|^{p-1}|x|^{-1}\right](x+y)\cdot x \\
    &+ |x+y|^{p-1}|x|^{-1}[ (p-1)|x+y|^2 + (p-1)|x|^2-2(p-2)|x+y||x|].
\end{align*}
Observe that
$$|x+y|^{p-2} + |x|^{p-2} -2 |x+y|^{p-1}|x|^{-1} =\frac{|x+y|^{p-1}}{|x+y|}-\frac{|x+y|^{p-1}}{|x|}+\frac{|x|^{p-1}-|x+y|^{p-1}}{|x|}\ge 0.$$
Then it suffices to show that
\begin{align}\label{key-ineq-p-lem-r2-s22}
    |x+y|^{p} + |x|^{p}  \ge & \  \left[|x+y|^{p-2}|x| + |x|^{p-1} -  2 |x+y|^{p-1} \right]|x+y| \nonumber\\
   &+ |x+y|^{p-1}|x|^{-1}[ (p-1)|x+y|^2 + (p-1)|x|^2-2(p-2)|x+y||x|].
\end{align}
To prove this, leting $t=\frac{|x+y|}{|x|},$ it is sufficient to prove $$f(t)=(2p-1)t^p+1-t-(p-1)t^{p+1}-p t^{p-1}= (1-t)[(p-1)t^{p}-pt^{p-1}+1]\ge 0.$$
Note that in this case, $c_3^{\frac{1}{p-1}}\le t\le 1$ and $g(t):=(p-1)t^{p}-pt^{p-1}+1\ge 0$ as $$g(1)=0\quad \text{and}\quad g'(t)=p(p-1)t^{p-2}(t-1)\le 0.$$ This implies that $f(t)=(1-t)g(t)\ge 0.$
Therefore, we conclude \eqref{key-ineq-p-lem-r2-s22} and the desired inequality \eqref{key-ineq-p-lem-s2-1} follows. 

\noindent\textbf{Step $2:$} We again discuss in two cases. For $1<p<2 $ and $|x+y|\le |x|,$ recall $(2.2)$ in \cite[Lemma 2.1]{FZ2022}, 
\begin{equation}\label{nonegative-esti-1}
    |x|^{p-2}|y|^2+(p-2)|x|^{p-2}(|x|-|x+y|)^2\ge c(p)\frac{|x|}{|x|+|y|}|x|^{p-2}|y|^2\ge 0,\quad \text{where}\ c(p)>0.
\end{equation}
    Since $1<p<2$ and $|x|\le |x+y|,$ by the triangle inequality, we have 
\begin{align}\label{nonnegative-esti-2}
    &\quad |x+y|^{p-2}|y|^2+(p-2)\frac{|x+y|}{(2-p)|x+y|+(p-1)|x|}|x+y|^{p-2}(|x|-|x+y|)^2\nonumber\\
    &\ge |x+y|^{p-2}|y|^2+(p-2)\frac{|x+y|}{(2-p)|x+y|+(p-1)|x|}|x+y|^{p-2} |y|^2\nonumber\\
    &= |x+y|^{p-2}|y|^2 \big(\frac{(2-p)|x+y|+(p-1)|x|+(p-2)|x+y|}{(2-p)|x+y|+(p-1)|x|}\big)\nonumber\\
    &= |x+y|^{p-2}|y|^2\frac{(p-1)|x|}{(2-p)|x+y|+|x|}\nonumber\\
    &\ge c(p)\frac{|x|}{|x|+|y|}|x+y|^{p-2}|y|^2\ge 0.
\end{align}
Thus, for any $\kappa >0,$ when $1<p<2,$ it follows from \eqref{key-ineq-p-lem-s1} and \eqref{key-ineq-p-lem-s1-4}, combined with \eqref{nonegative-esti-1} and \eqref{nonnegative-esti-2} respectively, that
\begin{equation*}
     |x+y|^{p-2}(x+y)\cdot y\ge |x|^{p-2}x\cdot y+(1-\kappa)\big(\omega_1|y|^2+(p-2)\omega_2(|x|-|x+y|)^2\big),
\end{equation*}
and the equality holds only when $y=0.$
Similarly, a version of the above inequality also holds for the case when $p\ge 2$, and the equality holds if only if $y=0$ as well.  
Now we can derive inequalities \eqref{key-ineq-p-lem-r1} and \eqref{key-ineq-p-lem-r2} by the arguments similar to Step $(i)$-$3$ and   Step $(ii)$-$2$ in the proof of \cite[Lemma 2.1]{FZ2022}, respectively. 

Finally, we prove \eqref{lower-estimate}. It holds clearly when $|x|\le |x+y|$ and $|x+y|\le c_3^{\frac{1}{p-1}}|x|.$ When $c_3^{\frac{1}{p-1}}|x|\le |x+y|\le |x|,$ it is easy to check since 
    $f(t)=t^{p-1}$ is strictly increasing  when $t>0.$
\end{proof}

The proof of the following lemma is similar to that of \cite[Lemma 2.4]{FZ2022} and \cite[Lemma 3.2]{FN2019}. We postpone the proof with full details in the appendix.

\begin{lemma}\label{key-ineq-p*-lem}
The two following inequalities hold. Let $\kappa>0.$
    \begin{enumerate}
        \item For $1<p\le \frac{2n}{n+2},$ it holds 
        \begin{equation}\label{key-ineq-p*-lem-r1}
        |a+b|^{p^*-2}(a+b)b\le |a|^{p^*-2}ab+ (p^*-1+\kappa)\frac{(|a|+C_1|b|)^{p^*}}{|a|^2+|b|^2}|b|^2,
        \end{equation}
        where $a,b\in \mathbb{R}, a\ne 0$ and $ C_1$ is a positive constant depending on $p^*$ and $\kappa.$
        \item For $\frac{2n}{n+2}<p<\infty,$ it holds 
        \begin{equation}\label{key-ineq-p*-lem-r2}
        |a+b|^{p^*-2}(a+b)b\le|a|^{p^*-2}ab+(p^*-1+\kappa)|a|^{p^*-2}|b|^2+ C_2|b|^{p^*},   
        \end{equation}
        where $a,b\in \mathbb{R}, a\ne 0$ and $C_2$ is a positive constant depending on $p^*$ and $\kappa.$
    \end{enumerate}
\end{lemma}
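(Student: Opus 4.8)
\textbf{Proof proposal for Lemma \ref{key-ineq-p*-lem}.}

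The plan is to reduce everything to a one-dimensional computation, since $a,b\in\mathbb{R}$. Write $a\neq 0$ and set $s=b/a$, so that the inequality to be proved becomes (after dividing by $|a|^{p^*}$ and keeping track of signs) a statement about the scalar function
\[
\Phi(s):=|1+s|^{p^*-2}(1+s)s-s.
\]
For small $|s|$ a Taylor expansion gives $\Phi(s)=(p^*-1)s^2+O(|s|^3)$, which already produces the ``main'' quadratic term $(p^*-1)|a|^{p^*-2}|b|^2$ with the constant $p^*-1$; the role of $\kappa>0$ is precisely to absorb the cubic and higher remainder near $s=0$, at the price of either a weight $(|a|+C_1|b|)^{p^*}/(|a|^2+|b|^2)$ (case $1<p\le \frac{2n}{n+2}$, i.e.\ $p^*$ small, possibly $p^*<2$, where no clean $|b|^{p^*}$ term is available) or an additive term $C_2|b|^{p^*}$ (case $p^*>2$-friendly regime $\frac{2n}{n+2}<p<\infty$). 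First I would fix $\kappa$ and choose $\delta=\delta(p^*,\kappa)>0$ so that on $|s|\le\delta$ one has $\Phi(s)\le (p^*-1+\kappa)s^2$ by the expansion above; this is the easy local regime.

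The substantive work is the region $|s|\ge\delta$, equivalently $|b|\ge \delta|a|$. Here the strategy differs by case. For case (2), $\frac{2n}{n+2}<p<\infty$: on $|s|\ge \delta$ the quantity $|\Phi(s)|$ is controlled by $C|s|^{\max\{2,p^*-1\}}\cdot(1+|s|)\le C'|s|^{p^*}$ for $|s|$ large, and is bounded on a compact annulus $\delta\le|s|\le R$; choosing $C_2=C_2(p^*,\kappa)$ large enough makes $\Phi(s)\le (p^*-1+\kappa)s^2+C_2|s|^{p^*}$ hold throughout $|s|\ge\delta$ (the $s^2$ term is simply discarded where unhelpful). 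Translating back via $s=b/a$ and multiplying by $|a|^{p^*}$ gives \eqref{key-ineq-p*-lem-r2}. For case (1), $p^*$ possibly below $2$, I cannot afford a bare $|b|^{p^*}$ because near $b=0$ that term is too weak to beat the quadratic remainder, while for $|b|$ large $|b|^2$ is too strong; the weight $w(a,b)=(|a|+C_1|b|)^{p^*}/(|a|^2+|b|^2)$ interpolates: $w|b|^2\sim |a|^{p^*-2}|b|^2$ when $|b|\lesssim|a|$ and $w|b|^2\sim |b|^{p^*}$ when $|b|\gtrsim|a|$. So the estimate on $|s|\ge\delta$ becomes: show $\Phi(s)\le (p^*-1+\kappa)\frac{(1+C_1|s|)^{p^*}}{1+s^2}s^2$, which for $|s|$ bounded away from $0$ reduces to a continuity/compactness argument on $\delta\le|s|\le R$ (choosing $C_1$ large enough to dominate the bounded ratio $\Phi(s)/s^2$ there) together with the asymptotic $\Phi(s)\sim |s|^{p^*}$, $\frac{(1+C_1|s|)^{p^*}}{1+s^2}s^2\sim C_1^{p^*}|s|^{p^*}$ as $|s|\to\infty$, so large $C_1$ wins. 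A small point to check is the sign of $1+s$ (i.e.\ whether $|a+b|$ vanishes or $a+b$ changes sign relative to $b$), but for $1+s\le 0$ the left side $|1+s|^{p^*-2}(1+s)s$ has a favorable sign when $s<0$ and is easily dominated; this case is routine.

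I expect the main obstacle to be case (1) and specifically pinning down that a \emph{single} constant $C_1$ (depending only on $p^*,\kappa$, not on $a,b$) simultaneously handles the compact annulus $\delta\le |s|\le R$ and the two asymptotic regimes $s\to 0^{\pm}$ (already dispatched by the choice of $\delta$) and $|s|\to\infty$; the interpolating weight must be verified to lie above $\Phi(s)/s^2$ uniformly, which requires comparing the exact one-variable functions rather than just orders of magnitude. Once the scalar inequalities $\Phi(s)\le(\text{RHS})$ are established on all of $\mathbb{R}$, homogenizing by $s=b/a$ and multiplying through by $|a|^{p^*}$ yields \eqref{key-ineq-p*-lem-r1} and \eqref{key-ineq-p*-lem-r2}; this is why, as noted, the proof runs parallel to \cite[Lemma 2.4]{FZ2022} and \cite[Lemma 3.2]{FN2019}, and the full computational details are deferred to the appendix.
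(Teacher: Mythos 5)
Your proposal follows essentially the same route as the paper: pass to the scalar variable $s=b/a$, handle $|s|$ small by Taylor expansion of $\Phi(s)=|1+s|^{p^*-1}s-s$ around $s=0$, and cover $|s|\gtrsim 1$ by a compactness argument on an annulus together with the matching asymptotics $\Phi(s)\sim|s|^{p^*}$ as $|s|\to\infty$. For case (2) the paper argues by contradiction instead of your direct estimate, but that is only cosmetic. One point you pass over in case (1): showing $\Phi(s)\le(p^*-1+\kappa)s^2$ for $|s|\le\delta$ does not by itself give the claimed inequality there, because you still need the weight $(1+C_1|s|)^{p^*}/(1+s^2)$ to be at least $1$ on that interval, which is not automatic for arbitrary $C_1>0$. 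The paper secures this by the concavity of $t\mapsto t^{1/p^*}$ (equivalently Bernoulli's inequality), giving the sufficient condition $C_1\ge 1/p^*$; this constraint must be folded into your final choice of $C_1$ alongside the compactness and $|s|\to\infty$ requirements you already list.
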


\section{Spectral gap estimate}

Let $v
\in \mathcal{M}$ and define the linearized $p$-Laplacian operator at $v$ as 
\begin{equation*}
\mathcal{L}_v[\varphi]:=-{\rm div}\left(|Dv|^{p-2}D\varphi+(p-2)|Dv|^{p-4}(Dv\cdot D\varphi)Dv\right).
\end{equation*}
In \cite[Proposition 3.1]{FN2019}, Figalli and Neumayer proved that the manifold 
\begin{equation*}  T_v\mathcal{M}:={\rm span}\{v,\partial_\lambda v,\partial_{z_1}v,\cdots, \partial_{z_n}v\}
\end{equation*}
generates the first and the second eigenspaces of  $\mathcal{L}_v$ 
for $p>2,$ where $z_i$ is the $i$-th element of $z.$ Furthermore, in Section 3.2 of \cite{FZ2022}, Figalli and Zhang generalized this fact to the full range $1<p<n.$
Besides, they utilized the following definition.
\begin{definition}
    For any function $\varphi\in L^{p^*}(\mathbb{R}^n),$ if 
    \begin{equation*}
        \int_{\mathbb{R}^n}v^{p^*-2}\xi\varphi dx=0,\quad \forall \ \xi\in T_v\mathcal{M},
    \end{equation*}
    then we say $\varphi$ is orthogonal to $T_v\mathcal{M}$ in $L^2(\mathbb{R}^n;v^{p^*-2}).$
\end{definition}
\begin{lemma}\label{orth-V-lem}
  There exist $\tilde{\delta}=\tilde{\delta}(n,p)>0$ and a modulus of continuity $\omega:\mathbb{R}_+\rightarrow \mathbb{R}_+$ such that the following statement holds: Let $\|u\|_{L^{p^*}(\mathbb{R}^n)}=1,\ \delta>0$ small enough and 
  $$\|Du-aDU[0,1]\|_{L^p(\mathbb{R}^n)}\le \delta.$$
  If $\delta\le \tilde{\delta},$ then there exists  $v\in \mathcal{M},$ such that $u-v$ is orthogonal to $T_v\mathcal{M}$ and $$\|Du-Dv\|_{L^p(\mathbb{R}^n)}\le \omega(\delta).$$
\end{lemma}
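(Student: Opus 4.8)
The statement is an existence-plus-quantitative-closeness result: starting from a function $u$ that is $\delta$-close in $\dot W^{1,p}$ to a specific bubble $aU[0,1]$, we must produce $v\in\mathcal M$ with $u-v$ orthogonal to $T_v\mathcal M$ and $\|Du-Dv\|_{L^p}\le\omega(\delta)$. The natural plan is to set up a finite-dimensional minimization: among all $w\in\mathcal M$ (which is an $(n+1)$-dimensional manifold parametrized by $(a,z,\lambda)$), minimize the functional $w\mapsto\|Du-Dw\|_{L^p(\mathbb R^n)}^p$, or a regularized variant of it. Since $u$ is $\delta$-close to the point $aU[0,1]\in\mathcal M$, any near-minimizer $v$ must satisfy $\|Du-Dv\|_{L^p}\le 2\delta$ (say), which already gives a crude bound; the issue is to upgrade "near-minimizer in $\mathcal M$" to "$u-v$ orthogonal to $T_v\mathcal M$ in the $L^2(v^{p^*-2})$ sense" and to obtain the modulus $\omega(\delta)$. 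I would follow closely the strategy of \cite[Lemma 4.1]{FZ2022}: because $\dot W^{1,p}$ is not Hilbert, the first-order condition for the minimization of $\|Du-Dw\|_{L^p}^p$ over $\mathcal M$ does not directly read as the stated orthogonality; instead one either (a) minimizes a cleverly chosen functional whose Euler--Lagrange equations give exactly the orthogonality to $T_v\mathcal M$ with respect to the weight $v^{p^*-2}$, or (b) applies an implicit function theorem / degree argument on the map from parameters $(a,z,\lambda)$ to the $(n+2)$ orthogonality conditions $\int_{\mathbb R^n} v^{p^*-2}\xi(u-v)\,dx$ for $\xi$ ranging over a basis of $T_v\mathcal M$.

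Concretely, the steps I would carry out are as follows. First, by scaling and translation invariance of the problem, reduce to $u$ normalized with $\|u\|_{L^{p^*}}=1$ and close to $aU[0,1]$; note that $a$ is determined (up to the small perturbation) by the normalization \eqref{norm-value}. Second, consider the function $\Phi(a,z,\lambda):=\big(\int v^{p^*-2} v\,(u-v)\,dx,\ \int v^{p^*-2}\partial_\lambda v\,(u-v)\,dx,\ \int v^{p^*-2}\partial_{z_i}v\,(u-v)\,dx\big)$ where $v=aU[z,\lambda]$; at $u=aU[0,1]$ (i.e.\ when $u\in\mathcal M$) one has $\Phi=0$ at the matching parameters, and the differential of $\Phi$ in the parameters at that point is, up to normalization, the Gram matrix of $\{v,\partial_\lambda v,\partial_{z_i}v\}$ in $L^2(v^{p^*-2})$, which is invertible (this is exactly the nondegeneracy underlying \cite[Proposition 3.1]{FN2019} and Section 3.2 of \cite{FZ2022}). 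Third, apply a quantitative implicit function theorem: since $u$ is $\delta$-close to $aU[0,1]$ and $\Phi$ depends continuously (indeed with quantitative bounds) on $u$ in $\dot W^{1,p}$, there is a solution $(a^*,z^*,\lambda^*)$ of $\Phi=0$ within distance $\omega_0(\delta)$ of $(a,0,1)$ in parameter space, for some modulus $\omega_0$. Fourth, set $v=a^*U[z^*,\lambda^*]$; then $u-v$ is orthogonal to $T_v\mathcal M$ by construction, and the parameter proximity $|(a^*,z^*,\lambda^*)-(a,0,1)|\le\omega_0(\delta)$ combined with $\|u-aU[0,1]\|_{\dot W^{1,p}}\le\delta$ and the Lipschitz dependence of the map $(a,z,\lambda)\mapsto aU[z,\lambda]$ on parameters (on the relevant compact range) yields $\|Du-Dv\|_{L^p}\le\omega(\delta)$ by the triangle inequality.

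The main obstacle is the absence of Hilbert structure, which has two concrete manifestations. The first is that "being $\delta$-close to $\mathcal M$" does not automatically give an orthogonal representative — this is precisely the point emphasized in the introduction and resolved in \cite[Lemma 4.1]{FZ2022}, and the cleanest fix is to note that the orthogonality conditions $\Phi=0$ are defined through the \emph{fixed} weight $v^{p^*-2}$, i.e.\ they are linear in $u-v$ for fixed $v$, so that the only nonlinearity is in how $v$ enters; this keeps the implicit-function argument tractable. The second manifestation is purely quantitative: since no explicit bound is demanded (we only need \emph{some} modulus of continuity $\omega$), one can afford a soft compactness/contradiction argument for the existence of the orthogonal representative — if the conclusion failed, one would get a sequence $u_k\to aU[0,1]$ in $\dot W^{1,p}$ for which no nearby orthogonal $v_k$ exists, contradicting the nondegeneracy of the Gram matrix at the limit together with the implicit function theorem applied at $u=aU[0,1]$ exactly. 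I would phrase the proof via this contradiction/compactness route, as it avoids tracking the explicit form of $\omega_0$ and only uses (i) continuity of $u\mapsto\Phi$ in $\dot W^{1,p}$, (ii) invertibility of $D_{(a,z,\lambda)}\Phi$ at the bubble, which is the content already imported from \cite{FN2019,FZ2022}, and (iii) the quantitative implicit function theorem. A minor technical point to handle carefully is the mild non-smoothness of $v^{p^*-2}$ and $|Dv|^{p-2}$ when $1<p<2$, which affects differentiability of $\Phi$ in the parameters; but since we differentiate in the \emph{bubble parameters} (where $v$ is smooth away from a negligible set and the integrals converge with room to spare by the decay of bubbles), this does not cause real trouble.
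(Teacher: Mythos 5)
You correctly identify two possible strategies (your (a) and (b)) and then develop (b), the implicit-function-theorem route, in detail. The paper itself does not prove this lemma; it imports it wholesale from \cite[Lemma 4.1]{FZ2022}, which the introduction describes as introducing ``a new minimization principle'' --- i.e.\ your option (a). So the argument you write out is a genuinely different route from the one the paper relies on, and both are valid. The IFT/contradiction route you develop is conceptually transparent and, because only a qualitative modulus of continuity is needed, the compactness phrasing you choose is efficient. The minimization route of \cite{FZ2022} is arguably more robust in that it produces a minimizer without needing $C^1$ dependence of $\Phi$ on the parameters, a point you yourself flag as the delicate one when $p^*<2$: differentiating in $(a,z,\lambda)$ brings out a term like $(p^*-2)v^{p^*-3}(\partial_\tau v)\xi(u-v)$, and one must check that $v^{p^*-3}(\partial_\tau v)\xi\in L^{(p^*)'}$ (it does, since the product behaves like $v^{p^*-1}$ at infinity, and $v^{p^*-1}\in L^{(p^*)'}$ by the decay rate of the bubble). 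Two smaller remarks. First, the nondegeneracy of the Gram matrix of $\{v,\partial_\lambda v,\partial_{z_i}v\}$ in $L^2(v^{p^*-2})$ that you invoke does not actually need the spectral machinery of \cite[Proposition 3.1]{FN2019}: by parity of the tangent vectors under $x\mapsto -x$ the matrix is block diagonal, and the remaining $2\times2$ block is strictly positive by Cauchy--Schwarz since $v$ and $\partial_\lambda v$ are not proportional. Second, a dimension count you keep implicit should be made explicit: the orthogonality to $T_v\mathcal M$ imposes $n+2$ scalar conditions, so the free parameters in $v$ must be the full $(a,z,\lambda)\in(0,\infty)\times\mathbb R^n\times(0,\infty)$; in particular the multiplicative constant $a$ must be allowed to move away from the fixed value in \eqref{norm-value}, even though the paper's $\mathcal M$ as literally defined is $(n+1)$-dimensional. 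You do write $(a,z,\lambda)$ as the parameter tuple, so you have this, but it is worth stating because otherwise the map $\Phi$ from parameters to orthogonality conditions would be overdetermined.
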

According to Lemma \ref{orth-V-lem}, we write $u= v+\epsilon\varphi,$ such that $\varphi$ is orthogonal to $T_v\mathcal{M},$  $\|D\varphi\|_{L^p(\mathbb{R}^n)}=1$ and $\epsilon\leq \omega(\delta).$

Note that \eqref{norm-value} yields $\|v\|_{L^{p^*}(\mathbb{R}^n)}^{p-p^*}=S^{-p}.$ Moreover, we recall the spectral gap lemma from  \cite[Proposition 3.6]{FZ2022}. 
\begin{lemma}\label{Poincare-ineq}
 For $1<p<n$ and $\varphi\in L^2(\mathbb{R}^n;v^{p^*-2})$ orthogonal to $T_v\mathcal{M},$ there exists a positive constant $\lambda=\lambda(n,p)>0,$ such that
 $$\int_{\mathbb{R}^n}|Dv|^{p-2}|D\varphi|^2+(p-2)|Dv|^{p-4}|Dv\cdot D\varphi|^2 dx \ge \big(p^*-1+2\lambda S^{-p}\big)\int_{\mathbb{R}^n}v^{p^*-2}|\varphi|^2 dx.$$
\end{lemma}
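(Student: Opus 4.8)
The plan is to read the inequality off from the spectrum of the linearized operator $\mathcal{L}_v$ relative to the weighted pairing $\langle\varphi,\psi\rangle_v:=\int_{\mathbb{R}^n}v^{p^*-2}\varphi\psi\,dx$. Set $Q_v(\varphi):=\int_{\mathbb{R}^n}|Dv|^{p-2}|D\varphi|^2+(p-2)|Dv|^{p-4}(Dv\cdot D\varphi)^2\,dx$, which is the left-hand side of the claim and, at least formally, equals $\langle\mathcal{L}_v[\varphi],\varphi\rangle_v$. Splitting $D\varphi$ into its components along and orthogonal to $Dv$ gives the pointwise bounds $\min\{1,p-1\}|D\varphi|^2\le|D\varphi|^2+(p-2)|Dv|^{-2}(Dv\cdot D\varphi)^2\le\max\{1,p-1\}|D\varphi|^2$, so $Q_v$ is nonnegative and comparable to $\int_{\mathbb{R}^n}|Dv|^{p-2}|D\varphi|^2\,dx$; in particular $Q_v$ together with $\langle\cdot,\cdot\rangle_v$ defines a nonnegative self-adjoint operator, and we may assume $Q_v(\varphi)<\infty$ (otherwise there is nothing to prove). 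The target then becomes a statement purely about the Rayleigh quotient $R(\varphi):=Q_v(\varphi)/\langle\varphi,\varphi\rangle_v$: its spectrum is discrete, and the sum of the eigenspaces associated to eigenvalues $\le p^*-1$ is exactly $T_v\mathcal{M}$.

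The main difficulty, and the place where the argument genuinely departs from the classical $p=2$ case (where one conjugates $\mathcal{L}_v$ by stereographic projection to a Schr\"odinger operator on the compact manifold $S^n$), is establishing discreteness of the spectrum of $R$. The conformal trick is unavailable for general $p$, so instead one shows directly that the embedding of the form domain (the completion of $C_c^\infty(\mathbb{R}^n)$ under $Q_v(\cdot)+\langle\cdot,\cdot\rangle_v$) into $L^2(\mathbb{R}^n;v^{p^*-2})$ is compact: this is precisely what the weighted Sobolev and compactness results \cite[Proposition 3.1, Lemma 3.4, Corollary 3.5]{FZ2022} provide, the weights $|Dv|^{p-2}$ and $v^{p^*-2}$ degenerating only at the concentration point and decaying at a controlled rate at infinity. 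Compactness yields a discrete spectrum $\mu_1\le\mu_2\le\cdots\to\infty$ with an $L^2(v^{p^*-2})$-orthonormal eigenbasis.

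Next, locate the bottom eigenvalues. A one-line computation gives $\mathcal{L}_v[v]=(p-1)(-\Delta_p v)=(p-1)v^{p^*-2}v$, so $v$ is an eigenfunction; being positive it is the ground state, i.e. $\mu_1=p-1$. Since every element of $\mathcal{M}$ solves $-\Delta_p w=w^{p^*-1}$, differentiating this relation along $\mathcal{M}$ at $v$ produces $\mathcal{L}_v[\xi]=(p^*-1)v^{p^*-2}\xi$ for $\xi\in\{\partial_\lambda v,\partial_{z_1}v,\dots,\partial_{z_n}v\}$, and $p<n$ forces $p^*-1>p-1$. The non-degeneracy input, that $p^*-1$ is exactly the second eigenvalue and that $\{\partial_\lambda v,\partial_{z_1}v,\dots,\partial_{z_n}v\}$ spans the full eigenspace for the eigenvalues in $(p-1,p^*-1]$, is the fact recalled above from \cite[Proposition 3.1]{FN2019} and Section 3.2 of \cite{FZ2022}. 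Hence $T_v\mathcal{M}=\mathrm{span}\{v,\partial_\lambda v,\partial_{z_1}v,\dots,\partial_{z_n}v\}$ is exactly the sum of the eigenspaces of $R$ for $p-1$ and $p^*-1$, and the next eigenvalue $\mu:=\mu_{n+3}$ satisfies $\mu>p^*-1$, with $\mu$ depending only on $n$ and $p$.

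Finally, if $\varphi\in L^2(\mathbb{R}^n;v^{p^*-2})$ is orthogonal to $T_v\mathcal{M}$, it belongs to the closed span of the eigenfunctions with eigenvalue $\ge\mu$, so expanding $\varphi$ in the eigenbasis and using the min--max characterization gives $Q_v(\varphi)\ge\mu\,\langle\varphi,\varphi\rangle_v$. Choosing $\lambda:=\tfrac12 S^p\big(\mu-(p^*-1)\big)>0$, which depends only on $n$ and $p$, rewrites this as $Q_v(\varphi)\ge(p^*-1+2\lambda S^{-p})\int_{\mathbb{R}^n}v^{p^*-2}|\varphi|^2\,dx$, which is the assertion.
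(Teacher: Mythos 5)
The paper does not prove this lemma at all: it is quoted directly from \cite[Proposition 3.6]{FZ2022}, so there is no in-paper argument to compare against. Your proof is a correct reconstruction of the standard argument behind that citation --- compactness of the embedding of the weighted form domain into $L^2(\mathbb{R}^n;v^{p^*-2})$, identification of the eigenvalues $p-1$ and $p^*-1$ whose eigenspaces span $T_v\mathcal{M}$ via the non-degeneracy results of \cite{FN2019} and Section~3.2 of \cite{FZ2022}, and min--max --- and is essentially the same approach as the source being cited.
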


In \cite[Proposition 3.8]{FZ2022}, Figalli and Zhang proved the spectral gap in  Lemma \ref{Poincare-ineq} is stable under a modification of the coefficient, which corresponds to \cite[Lemma 2.1]{FZ2022}. Analogously, we prove the following stability result of the same spectral gap,  which is compatible to our new inequalities Lemma \ref{key-ineq-p-lem}. 
\begin{prop}\label{sepctrum}
 For any $\gamma_0>0$ and $C_1>0,$ there exists $\bar{\delta}=\bar{\delta}(n,p,\gamma_0,C_1)>0$ such that the following statement holds:
 
Let $\varphi\in \dot{W}^{1,p}(\mathbb{R}^n)$ be orthogonal to $T_v\mathcal{M}$ in $L^2(\mathbb{R}^n;v^{p^*-2})$ and satisfy 
 $$\|\varphi\|_{\dot{W}^{1,p}(\mathbb{R}^n)}\le \bar{\delta},$$
 and define $\omega_j=\omega_j(Dv,Dv+D\varphi)$ (namely, choosing $x=Dv$ and $y=D\varphi$ in the definition of $\omega_j$ in Lemma \ref{key-ineq-p-lem}). Then
  \begin{enumerate}
        \item For $1<p\le \frac{2n}{n+2},$ it holds
        \begin{multline}\label{gap-ineq-r1}
            \int_{\mathbb{R}^n}\omega_1|D\varphi|^2+(p-2)\omega_2(|D(v+ \varphi)|-|Dv|)^2dx  
             +\gamma_0\int_{\mathbb{R}^n}\min\{ |D\varphi|^p,\ |Dv|^{p-2}|D\varphi|^2\} dx \\
            \ge \big(p^*-1+\lambda S^{-p}\big)\int_{\mathbb{R}^n}\frac{(v+C_1|\varphi|)^{p^*}}{v^2+|\varphi|^2}|\varphi|^2 dx.
        \end{multline}
            \item For $\frac{2n}{n+2}<p<2,$ it holds 
             \begin{multline}\label{gap-ineq-r2}
            \int_{\mathbb{R}^n}\omega_1|D\varphi|^2+(p-2)\omega_2(|D(v+ \varphi)|-|Dv|)^2dx  
             +\gamma_0\int_{\mathbb{R}^n}\min\{ |D\varphi|^p,\ |Dv|^{p-2}|D\varphi|^2\} dx \\
            \ge \big(p^*-1+\lambda S^{-p}\big)\int_{\mathbb{R}^n}v^{p^*-2}|\varphi|^2 dx.
        \end{multline} 
        \item For $p\ge 2,$ it holds 
        \begin{equation}\label{gap-ineq-r3} \int_{\mathbb{R}^n}\omega_3|D\varphi|^2+(p-2)\omega_4(|D(v+ \varphi)|-|Dv|)^2dx
            \ge \big(p^*-1+\lambda S^{-p}\big)\int_{\mathbb{R}^n}v^{p^*-2}|\varphi|^2 dx.
        \end{equation}      
    \end{enumerate}
    \end{prop}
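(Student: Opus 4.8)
\textbf{Proof strategy for Proposition \ref{sepctrum}.}

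The plan is to argue by contradiction and compactness, following the scheme of \cite[Proposition 3.8]{FZ2022} but adapted to the new weights $\omega_j$. Suppose the claim fails for, say, case (3): then there exist $\gamma_0, C_1 > 0$ and a sequence $\varphi_k \in \dot W^{1,p}(\mathbb R^n)$, each orthogonal to $T_{v}\mathcal M$ in $L^2(\mathbb R^n; v^{p^*-2})$, with $\|\varphi_k\|_{\dot W^{1,p}} \le 1/k \to 0$, yet
\begin{equation*}
  \int_{\mathbb R^n} \omega_3^{(k)} |D\varphi_k|^2 + (p-2)\,\omega_4^{(k)}\bigl(|D(v+\varphi_k)| - |Dv|\bigr)^2\, dx < \bigl(p^*-1+\lambda S^{-p}\bigr)\int_{\mathbb R^n} v^{p^*-2}|\varphi_k|^2\, dx,
\end{equation*}
where $\omega_j^{(k)} = \omega_j(Dv, Dv + D\varphi_k)$. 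Normalizing, I would set $\psi_k := \varphi_k / \|D\varphi_k\|_{L^p}$, so $\|D\psi_k\|_{L^p} = 1$, $\psi_k$ still lies in the orthogonal complement, and I would divide the failed inequality by $\|D\varphi_k\|_{L^p}^2$ to obtain a uniform bound on the quadratic form of $\psi_k$. The point of the smallness hypothesis $\|\varphi_k\|_{\dot W^{1,p}} \to 0$ is that, since $|D\varphi_k| \to 0$ pointwise (a.e., along a subsequence), the weights $\omega_j^{(k)}$ converge a.e.\ to the "diagonal" value $\omega_j(Dv, Dv)$, which by inspection of the definitions equals $|Dv|^{p-2}$ — so in the limit the quadratic form degenerates to exactly the one appearing in Lemma \ref{Poincare-ineq}.

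The key steps, in order, are: (i) extract from $\{\psi_k\}$ a subsequence converging weakly in $\dot W^{1,p}$ and, crucially, strongly in the relevant weighted spaces — this is where I would invoke the compactness results \cite[Proposition 3.1, Lemma 3.4, Corollary 3.5]{FZ2022} cited in the introduction, which give compact embeddings of the orthogonal complement into $L^2(\mathbb R^n; v^{p^*-2})$ and into the gradient-weighted spaces $L^2(|Dv|^{p-2})$; (ii) pass to the limit in the orthogonality conditions to conclude the limit $\psi_\infty$ is still orthogonal to $T_v\mathcal M$ in $L^2(v^{p^*-2})$; (iii) control the weighted quadratic form from below in the limit. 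For (iii) I would use the lower bound \eqref{lower-estimate}, $c_3|x|^{p-2} \le \omega_3(x, x+y)$, together with the a.e.\ convergence $\omega_3^{(k)} \to |Dv|^{p-2}$ and Fatou's lemma applied to the nonnegative integrand
\begin{equation*}
  \omega_3^{(k)}|D\psi_k|^2 + (p-2)\,\omega_4^{(k)}\bigl(|Dv + D\psi_k| - |Dv|\bigr)^2 \ge \bigl(\omega_3^{(k)} + (p-2)\omega_4^{(k)}\bigr)\cdot(\text{something nonnegative}),
\end{equation*}
more precisely rewriting the integrand as was done in Lemma \ref{key-ineq-p-lem} Step 2 so that it is a sum of manifestly nonnegative pieces, then applying Fatou to each piece. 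This yields
\begin{equation*}
  \int_{\mathbb R^n} |Dv|^{p-2}|D\psi_\infty|^2 + (p-2)|Dv|^{p-4}|Dv \cdot D\psi_\infty|^2\, dx \le \bigl(p^*-1+\lambda S^{-p}\bigr)\int_{\mathbb R^n} v^{p^*-2}|\psi_\infty|^2\, dx,
\end{equation*}
which together with Lemma \ref{Poincare-ineq} (whose right-hand side has the strictly larger constant $p^*-1+2\lambda S^{-p}$) forces $\int v^{p^*-2}|\psi_\infty|^2 = 0$, hence $\psi_\infty = 0$. Finally, to reach a contradiction with $\|D\psi_k\|_{L^p} = 1$, I would observe that $\psi_\infty = 0$ together with the strong convergence forces the right-hand side $\int v^{p^*-2}|\psi_k|^2 \to 0$, and then feed this back: the failed inequality plus \eqref{lower-estimate} gives $c_3\int |Dv|^{p-2}|D\psi_k|^2\,dx \to 0$, which by the compactness/concentration analysis of \cite{FZ2022} (the gradient cannot concentrate at the origin or escape to infinity while $\|D\psi_k\|_{L^p}=1$) contradicts the normalization. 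Cases (1) and (2) are handled identically, except that the extra nonnegative term $\gamma_0 \int \min\{|D\varphi|^p, |Dv|^{p-2}|D\varphi|^2\}\,dx$ only helps (it can be dropped or kept, whichever is convenient), and the right-hand side is replaced by the weighted Sobolev-type quantity $\int \frac{(v+C_1|\varphi|)^{p^*}}{v^2+|\varphi|^2}|\varphi|^2\,dx$, for which one notes $\frac{(v+C_1|\psi_k|)^{p^*}}{v^2+|\psi_k|^2}|\psi_k|^2 \to v^{p^*-2}|\psi_\infty|^2$ a.e.\ when $\psi_k \to 0$, reducing to the same limiting inequality.

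\textbf{Main obstacle.} The delicate point is step (iii): justifying the passage to the limit in the weighted quadratic form when $1<p<2$, where $|Dv|^{p-2}$ blows up on the set $\{Dv = 0\}$ (and where $\omega_1, \omega_2$ involve ratios that are only controlled, not monotone). One cannot simply use dominated convergence; the safe route is to keep the integrand in the sum-of-nonnegative-terms form coming from Lemma \ref{key-ineq-p-lem} Step 2 and \eqref{nonnegative-esti-2}, and apply Fatou termwise, being careful that each term's a.e.\ limit is the correct piece of the limiting form. A secondary subtlety is ensuring that the strong convergence supplied by \cite[Proposition 3.1, Lemma 3.4, Corollary 3.5]{FZ2022} is exactly in the norm $\bigl(\int v^{p^*-2}|\cdot|^2\bigr)^{1/2}$ (for the right-hand side) and that no mass of $|D\psi_k|^p$ is lost at infinity or concentrated at a point — this last "no concentration" fact is what ultimately contradicts $\|D\psi_k\|_{L^p}=1$, and it is precisely the content of the compactness package imported from \cite{FZ2022}.
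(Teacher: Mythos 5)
Your overall strategy (argue by contradiction, extract a limit, feed it into Lemma~\ref{Poincare-ineq}) matches the paper's, but the choice of normalization is off in a way that leaves a genuine gap. You normalize the contradicting sequence by $\|D\varphi_k\|_{L^p}$, so that $\|D\psi_k\|_{L^p}=1$; this guarantees nothing about the \emph{weighted} $\dot W^{1,2}$ norm that actually matters here. The paper instead sets (for $p\ge 2$) $\epsilon_i:=\|\varphi_i\|_{\dot W^{1,2}(\mathbb R^n;|Dv|^{p-2})}$ and (for $p<2$) $\epsilon_i:=\bigl(\int(|Dv|+|D\varphi_i|)^{p-2}|D\varphi_i|^2\bigr)^{1/2}$, normalizes $\hat\varphi_i=\varphi_i/\epsilon_i$, and this is the essential trick: the failed inequality combined with \eqref{lower-estimate} then gives a lower bound $0<c\le\|\hat\varphi\|_{L^2(\mathbb R^n;v^{p^*-2})}$ on the limit, so $\hat\varphi\ne 0$, and the limiting inequality \eqref{gap-ineq-s123} directly contradicts Lemma~\ref{Poincare-ineq}. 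With your normalization, you correctly anticipate that $\psi_\infty$ may vanish, and you try to close the loop by claiming that $\int|Dv|^{p-2}|D\psi_k|^2\to 0$ with $\|D\psi_k\|_{L^p}=1$ is impossible by a ``no concentration'' principle. But it \emph{is} possible: for $p>2$, Hölder gives $\int|Dv|^{p-2}|D\psi_k|^2\le C\|D\psi_k\|_{L^p}^2$, so the weighted norm is dominated by, not comparable to, the $L^p$ norm, and a gradient escaping to infinity (where $|Dv|\to 0$) makes the weighted quantity vanish while keeping $\|D\psi_k\|_{L^p}=1$. The compactness package in \cite{FZ2022} rules out loss of the weighted $L^2(v^{p^*-2})$ mass under a weighted-gradient bound; it does not rule out loss of $L^p$ gradient mass, which is exactly what your final contradiction requires.

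A secondary gap is the ``Fatou termwise'' passage to the limit. Even after the nonnegative rewriting of Lemma~\ref{key-ineq-p-lem} Step 2, Fatou would need \emph{a.e.} convergence of the integrand, hence of $D\hat\varphi_i$, while one only has weak $L^2$ convergence. The paper's argument handles this by expanding $\hat\varphi_i=\hat\varphi+\psi_i$, using the weak convergence $D\psi_i\chi_{\mathcal R_i}\rightharpoonup 0$ to kill the cross terms (a bilinear, not a Fatou, argument), dropping the manifestly nonnegative $\psi_i$-quadratic piece, and then using dominated convergence on the remaining terms involving $D\hat\varphi$ together with the pointwise convergence $\omega_{j,i}\to|Dv|^{p-2}$ and the domination $\omega_{j,i}\le C|Dv|^{p-2}$ on $\mathcal R_{i,R}$. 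It would be worth rewriting your step (iii) along these lines; the Fatou heuristic does not survive inspection because of the lack of pointwise gradient convergence, and the splitting into $\mathcal R_{i,R}$ and $\mathcal S_{i,R}$ (on annuli, so that $|Dv|$ is bounded away from $0$ and $\infty$) is needed to make the dominated-convergence and weak-convergence steps rigorous.
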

    
    \begin{proof}
    We prove the result by contradiction in all three cases.
        So, in all cases we shall have a sequence $\varphi_i$, with $\|\varphi_i\|_{\dot{W}^{1,p}(\mathbb{R}^n)}\to 0,$ along which the statement is false. 
        
        \noindent {\bf Case} $(1):$ $1<p<\frac{2n}{n+2}.$ Suppose that the inequality \eqref{gap-ineq-r1} fails. Then there exists a sequence $0\not\equiv \varphi_i\rightarrow 0$ in $\dot{W}^{1,p}(\mathbb{R}^n),$  orthogonal to $T_v\mathcal{M},$ such that 
        \begin{multline*}\label{gap-ineq-r1-s1}
            \int_{\mathbb{R}^n}\omega_{1,i}|D\varphi_i|^2+(p-2)\omega_{2,i}(|D(v+ \varphi_i)|-|Dv|)^2dx  \\
             +\gamma_0\int_{\mathbb{R}^n}\min\{ |D\varphi_i|^p,\ |Dv|^{p-2}|D\varphi_i|^2\} dx \\
            <\big(p^*-1+\lambda S^{-p}\big)\int_{\mathbb{R}^n}\frac{(v+C_1|\varphi_i|)^{p^*}}{v^2+|\varphi_i|^2}|\varphi_i|^2 dx,
        \end{multline*}
        where $\omega_{1,i}$ and $\omega_{2,i}$ corresponds to $\varphi_i$ (i.e., $\omega_{1,i}=\omega_1(Dv,Dv+D\varphi_i)$ and $\omega_{2,i}=\omega_2(Dv,Dv+D\varphi_i)$).

        Define $$\epsilon_i:=\left(\int_{\mathbb{R}^n}(|Dv|+|D\varphi_i|)^{p-2}|D\varphi_i|^2 dx\right)^{\frac{1}{2}}\le \left(\int_{\mathbb{R}^n}|D\varphi_i|^p\right)^\frac{1}{2} dx\rightarrow 0,\quad \text{as}\ i\rightarrow \infty$$
        and $\hat{\varphi}_i:=\frac{\varphi_i}{\epsilon_i}.$ Also, set 
        \begin{equation*}
            \mathcal{R}_i:=\{2|Dv|\ge |D\varphi_i|\},\quad \mathcal{S}_i:=\{2|Dv|<|D\varphi_i|\},
        \end{equation*}
        and for every $R>1,$
        \begin{equation*}
            \mathcal{R}_{i,R}:=(B(0,R)\setminus B(0,1/R))\cap \mathcal{R}_i,\quad \mathcal{S}_{i,R}:=(B(0,R)\setminus B(0,1/R))\cap \mathcal{S}_i.
        \end{equation*}
    Combining with \eqref{nonegative-esti-1} and \eqref{nonnegative-esti-2}, we deduce that 
     \begin{multline}\label{gap-ineq-r1-s2}
           \int_{B(0,R)\setminus B(0,1/R)}\omega_{1,i} |D\hat{\varphi}_i|^2+(p-2)\omega_{2,i}\left(\frac{|D(v+ \varphi_i)|-|Dv|}{\epsilon_i}\right)^2dx  \\
        +\gamma_0\int_{B(0,R)\setminus B(0,1/R)}\min\{\epsilon_i^{p-2} |D\hat{\varphi}_i|^p,\ |Dv|^{p-2}|D\hat{\varphi}_i|^2\} dx \\
            \le \big(p^*-1+\lambda S^{-p}\big)\int_{\mathbb{R}^n}\frac{(v+C_1|\varphi_i|)^{p^*}}{v^2+|\varphi_i|^2}|\hat{\varphi}_i|^2 dx
        \end{multline}
    and 
    \begin{equation*}
        \omega_{1,i}|D\hat{\varphi}_i|^2+(p-2)\omega_{2,i}\left(\frac{|Dv+D\varphi_i|-|Dv|}{\epsilon_i}\right)^2
        \ge c(p)\omega_{1,i}|D\hat{\varphi}_i|^2,\quad \text{on}\ \mathcal{R}_{i,R}.
    \end{equation*}
    As a consequence, 
    \begin{multline}\label{gap-ineq-r1-s3}
c(p)\int_{\mathcal{R}_{i,R}}\omega_{1,i}|D\hat{\varphi}_i|^2dx +\gamma_0\int_{\mathcal{S}_{i,R}}\epsilon_i^{p-2}|D\hat{\varphi}_i|^p dx \\
        \le \int_{B(0,R)\setminus B(0,1/R)}\omega_{1,i}|D\hat{\varphi}_i|^2+(p-2)\omega_{2,i}\left(\frac{|D(v+ \varphi_i)|-|Dv|}{\epsilon_i}\right)^2dx  \\
        +\gamma_0\int_{B(0,R)\setminus B(0,1/R)}\min\{\epsilon_i^{p-2} |D\hat{\varphi}_i|^p,\ |Dv|^{p-2}|D\hat{\varphi}_i|^2\} dx \\
            \le \big(p^*-1+\lambda S^{-p}\big)\int_{\mathbb{R}^n}\frac{(v+C_1|\varphi_i|)^{p^*}}{v^2+|\varphi_i|^2}|\hat{\varphi}_i|^2 dx.
    \end{multline}
    As the triangle inequality tells
    $$|Dv+D\varphi_i|\le |Dv|+|D\varphi_i|\le 3|Dv|,\quad \text{on}\ \mathcal{R}_i,$$
    then 
    \begin{equation}\label{gap-ineq-r1-s7}
       \omega_{1,i}\ge C(p)|Dv|^{p-2},\quad \text{on}\ \mathcal{R}_i. 
    \end{equation}
   Combining \eqref{gap-ineq-r1-s7} with \eqref{gap-ineq-r1-s3}, we get
    \begin{align}\label{gap-ineq-r1-s4}
        1&=\epsilon_i^{-2}\int_{\mathbb{R}^n}(|Dv|+|D\varphi_i|)^{p-2}|D\varphi_i|^2 dx\nonumber\\
         &\le C(p)\left(\int_{\mathcal{R}_i}|Dv|^{p-2}|D\hat{\varphi}_i|^2dx+\int_{\mathcal{S}_i}\epsilon_i^{p-2}|D\hat{\varphi}_i|^p dx\right)\nonumber \\
        &\le C(p)\left(\int_{\mathcal{R}_i}\omega_{1,i}|D\hat{\varphi}_i|^2dx+\int_{\mathcal{S}_i}\epsilon_i^{p-2}|D\hat{\varphi}_i|^p dx\right) \nonumber\\
            &\le C(n,p,\gamma_0)\big(p^*-1+\lambda S^{-p}\big)\int_{\mathbb{R}^n}\frac{(v+C_1|\varphi_i|)^{p^*}}{v^2+|\varphi_i|^2}|\hat{\varphi}_i|^2 dx.
    \end{align}
    Furthermore, referring to \cite[Corollary 3.5]{FZ2022}, when $i$ large enough, we have 
    \begin{equation}\label{gap-ineq-r1-s4.5}
\int_{\mathbb{R}^n}\frac{(v+C_1|\varphi_i|)^{p^*}}{v^2+|\varphi_i|^2}|\hat{\varphi_i}|^2 dx\le  C(n,p,C_1)\int_{\mathbb{R}^n}(|Dv|+|D\varphi_i|)^{p-2}|D\hat{\varphi}_i|^2 dx\le C(n,p,C_1).
    \end{equation}
    Then,  by \eqref{gap-ineq-r1-s3}, \eqref{gap-ineq-r1-s4.5} and the definition of $\mathcal{S}_{i,R},$
    \begin{equation*}
\epsilon_i^{-2}\int_{\mathcal{S}_{i,R}}|Dv|^{p}dx\le \epsilon_i^{p-2}\int_{\mathcal{S}_{i,R}}|D\hat{\varphi}_i|^pdx\le C(n,p,C_1).
    \end{equation*}
 As $|Dv|$ is uniformly bounded away from zero inside $B(0,R)\setminus B(0,1/R),$ it yields
    \begin{equation}\label{gap-ineq-r1-s4.55}
        |\mathcal{S}_{i,R}|\rightarrow 0, \ i\rightarrow \infty,\quad \forall\ R>1.
    \end{equation}
    Now, according to the compactness result \cite[Lemma 3.4]{FZ2022}, \eqref{gap-ineq-r1-s4} and \eqref{gap-ineq-r1-s4.5} imply that there exists $\hat{\varphi}\in \dot{W}^{1,p}(\mathbb{R}^n)\cap L^2(\mathbb{R}^n,v^{p^*-2})$ such that $$\hat{\varphi}_i\rightharpoonup \hat{\varphi},\ \text{in} \ \dot{W}^{1,p}(\mathbb{R}^n),\quad \text{as}\ i\rightarrow \infty,$$
    and 
    \begin{equation}\label{gap-ineq-r1-s5}
 \int_{\mathbb{R}^n}\frac{(v+C_1|\varphi_i|)^{p^*}}{v^2+|\varphi_i|^2}|\hat{\varphi_i}|^2 dx\rightarrow \int_{\mathbb{R}^n}v^{p^*-2}|\hat{\varphi}|^2dx.
    \end{equation}
    Again, by \eqref{gap-ineq-r1-s3} and \eqref{gap-ineq-r1-s4.5}, we have 
    \begin{equation*}
    \int_{\mathcal{R}_{i,R}}\omega_{1,i}|D\hat{\varphi}_i|^2dx \le C(n,p,C_1).
    \end{equation*}
    Combining this with \eqref{gap-ineq-r1-s7}, 
    we derive $$C(p)\int_{\mathcal{R}_{i,R}}|Dv|^{p-2}|D\hat{\varphi}_i|^2 dx\le \int_{\mathcal{R}_{i,R}}\omega_{1,i}|D\hat{\varphi}_i|^2 dx\le C(n,p,C_1),$$
    which furthermore implies that, up to passing to a subsequence, 
    $$D\hat{\varphi}_i \chi_{\mathcal{R}_{i,R}}\rightharpoonup D\hat{\varphi}\chi_{B(0,R)\setminus B(0,1/R)}\quad \text{in}\  
 L^2(\mathbb{R}^n,\mathbb{R}^n),\quad \forall\ R>1,$$
 where we applied \eqref{gap-ineq-r1-s4.55} and the weak convergence of $\hat{\varphi}_i$ to $\hat{\varphi}$ in $\dot{W}^{1,p}(\mathbb{R}^n)$.
 In particular, $\hat{\varphi}\in \dot{W}_{\rm{loc}}^{1,2}(\mathbb{R}^n\setminus\{0\}),$ by the low semi-continuity of the weak limit. Moreover, letting $i\rightarrow 0,$ \eqref{gap-ineq-r1-s4}, \eqref{gap-ineq-r1-s4.5} and \eqref{gap-ineq-r1-s5} yield
 $$0<c(n,p,\gamma_0)\le \|\hat{\varphi}\|_{L^2(\mathbb{R}^n;v^{p^*-2})}\le C(n,p,C_1).$$
 Write $\hat{\varphi}_i=\hat{\varphi}+\psi_i,$ then
  $$\psi_i\rightharpoonup 0\quad \text{in}\ \dot{W}^{1,p}(\mathbb{R}^n)\quad \text{and}\quad D\psi_i\chi_{\mathcal{R}_i}\rightharpoonup 0\quad \text{in}\ L_{\rm{loc}}^2(\mathbb{R}^n\setminus \{0\},\mathbb{R}^n).$$ Set
 $$f_{i,1}:=\left[\int_{0}^1 \frac{Dv+tD\varphi_i}{|Dv+tD\varphi_i|}dt\right]\cdot D\hat{\varphi},\quad f_{i,2}:=\left[\int_{0}^1 \frac{Dv+tD\varphi_i}{|Dv+tD\varphi_i|}dt\right]\cdot D\psi_i,$$
 then 
 \begin{equation*}
    \left(\frac{|Dv+D\varphi_i|-|Dv|}{\epsilon_i}\right)^2=\left(\left[\int_0^1 \frac{Dv+tD\varphi_i}{|Dv+tD\varphi_i|}dt\right]\cdot D\hat{\varphi}_i\right)^2=(f_{i,1}+f_{i,2})^2.
 \end{equation*}
 Exploiting the strong $\dot{W}^{1,p}$-convergence of $\varphi_i$ to 0 and Lebesgue's dominated convergence theorem, we have, up to passing to a subsequence, $|\omega_{1,i}|\rightarrow |Dv|$ a.e.,  
  $$f_{i,1}\rightarrow \frac{Dv}{|Dv|}\cdot D\hat{\varphi}\quad \text{in}\ L_{\rm{loc}}^2(\mathbb{R}^n\setminus \{0\}),\quad  f_{i,2}\chi_{\mathcal{R}_i}\rightharpoonup 0\quad \text{in}\ L_{\rm{loc}}^2(\mathbb{R}^n\setminus \{0\}).$$
 Since $f_{i,2}^2\le |D\psi_i|^2$, \eqref{nonegative-esti-1} and \eqref{nonnegative-esti-2} hold, we have
 \begin{multline}
     \int_{\mathcal{R}_{i,R}}\omega_{1,i}|D\hat{\varphi}_i|^2+(p-2)\omega_{2,i}\left(\frac{|Dv+D\varphi_i|-|Dv|}{\epsilon_i}\right)^2dx\\
     =\int_{\mathcal{R}_{i,R}}\omega_{1,i}\left(|D\hat{\varphi}|^2+2D\psi_i\cdot D\hat{\varphi}\right)+(p-2)\omega_{2,i}(f_{i,1}^2+2f_{i,1}f_{i,2})dx\\
     +\int_{\mathcal{R}_{i,R}}\omega_{1,i}|D\psi_i|^2+(p-2)\omega_{2,i}f_{i,2}^2dx \\
     \ge \int_{\mathcal{R}_{i,R}}\omega_{1,i}\left(|D\hat{\varphi}|^2+2D\psi_i\cdot D\hat{\varphi}\right)+(p-2)\omega_{2,i}(f_{i,1}^2+2f_{i,1}f_{i,2})dx.
 \end{multline}
 By the definition of $\omega_{1,i}$ and $\omega_{2,i},$ 
 \begin{equation*}
     \omega_{1,i}\le C(p)|Dv|^{p-2}\quad \text{and}\quad \omega_{2,i}\le C(p)|Dv|^{p-2}\quad \text{on}\  \mathcal{R}_{i,R}.
 \end{equation*}
 Moreover, employing the following convergences
 \begin{equation*}
     D\psi_i\chi_{\mathcal{R}_i}\rightharpoonup 0,\quad f_{i,1}\rightarrow \frac{Dv}{|Dv|}\cdot D\hat{\varphi},\quad f_{i,2}\chi_{\mathcal{R}_i}\rightharpoonup 0,\quad \text{in}\ L_{\rm{loc}}^2(\mathbb{R}^n\setminus \{0\}),
 \end{equation*}
 \begin{equation*}
     \omega_{1,i}\rightarrow |Dv|^{p-2}\ {\rm a.e.},\quad 
     \omega_{2,i}\rightarrow |Dv|^{p-2}\ {\rm a.e.},\quad
     |(B(0,R)\setminus B(0,1/R))\setminus \mathcal{R}_{i,R}|\rightarrow 0
 \end{equation*}
 with the Lebesgue's dominated convergence theorem, we arrive at
\begin{multline*}
    \underset{i\rightarrow \infty}{\lim}\int_{\mathcal{R}_{i,R}}\omega_{1,i}\left(|D\hat{\varphi}|^2+2D\psi_i\cdot D\hat{\varphi}\right)+(p-2)\omega_{2,i}(f_{i,1}^2+2f_{i,1}f_{i,2})dx\\
    = \int_{B(0,R)\setminus B(0,1/R)}|Dv|^{p-2}|D\hat{\varphi}|^2+(p-2)|Dv|^{p-2}\left(\frac{Dv}{|Dv|}\cdot D\hat{\varphi}\right)^2 dx.
\end{multline*}
This together with \eqref{gap-ineq-r1-s2} and \eqref{gap-ineq-r1-s5} yields, when $R\rightarrow +\infty,$
\begin{equation}\label{gap-ineq-s123}
    \int_{\mathbb{R}^n}|Dv|^{p-2}|D\hat{\varphi}|^2+(p-2)|Dv|^{p-2}\left(\frac{Dv}{|Dv|}\cdot D\hat{\varphi}\right)^2 dx\\
    \le  (p^*-1+\lambda S^{-p})\int_{\mathbb{R}^n}v^{p^*-2}|\hat{\varphi}|^2dx.
\end{equation}
On the other hand, by Sobolev inequality and the orthogonality of $\varphi_i,$ we know $\hat{\varphi}_i\rightharpoonup \hat{\varphi}$ in $L^{p^*}(\mathbb{R}^n)$ and $\hat{\varphi}$ is orthogonal to $T_v\mathcal{M}.$ Notice that $\hat{\varphi}\in L^2(\mathbb{R}^n;v^{p^*-2}),$ then \eqref{gap-ineq-s123} contradicts Lemma \ref{Poincare-ineq}, and we conclude $(1)$ of this lemma.

\medskip

\noindent {\bf Case} $(2):$ $\frac{2n}{n+2}<p<2.$ Assume that \eqref{gap-ineq-r2} fails. Then there exists a sequence $0\not\equiv \varphi_i\rightarrow 0$ in $\dot{W}^{1,p}(\mathbb{R}^n),$ with $\varphi_i$ orthogonal to $T_v\mathcal{M},$ such that 
\begin{multline}\label{gap-ineq-r2-s1}
            \int_{\mathbb{R}^n}\omega_{1,i}|D\varphi_i|^2+(p-2)\omega_{2,i}(|D(v+ \varphi_i)|-|Dv|)^2dx \\ 
             +\gamma_0\int_{\mathbb{R}^n}\min\{ |D\varphi_i|^p,\ |Dv|^{p-2}|D\varphi_i|^2\} dx \\
<\big(p^*-1+\lambda S^{-p}\big)\int_{\mathbb{R}^n}v^{p^*-2}|\varphi_i|^2 dx,
            \end{multline}
            where $\omega_{1,i}$ and $\omega_{2,i}$ corresponds to $\varphi_i.$ 

            Similar to the case $(1),$ we define $$\epsilon_i:=\left(\int_{\mathbb{R}^n}(|Dv|+|D\varphi_i|)^{p-2}|D\varphi_i|^2 dx\right)^{\frac{1}{2}}\rightarrow 0,\quad \text{as}\ i\rightarrow 0,\quad \hat{\varphi}_i:=\frac{\varphi_i}{\epsilon_i}$$
       and split $B(0,R)\setminus B(0,1/R)=\mathcal{R}_{i,R}\cup \mathcal{S}_{i,R}.$
       It follows that \eqref{gap-ineq-r1-s3} and \eqref{gap-ineq-r1-s4} hold by replacing the term in the right-hand side to be $\int_{\mathbb{R}^n} v^{p^*-2}|\hat{\varphi}_i|^2 dx.$ By H\"{o}lder inequality, we reach
       \begin{align*}
       \int_{\mathbb{R}^n}|D\hat{\varphi}_i|^p dx &\le \left(\int_{\mathbb{R}^n}(|Dv|+|D\varphi_i|)^{p-2}|D\hat{\varphi}_i|^2dx\right)^\frac{p}{2}\left(\int_{\mathbb{R}^n}(|Dv|+|D\varphi_i|)^p dx\right)^{1-\frac{p}{2}}\\
       &\le C(p)\left[\left(\int_{\mathbb{R}^n}|Dv|^p dx\right)^{1-\frac{p}{2}}+\epsilon_i^\frac{p(2-p)}{2}\left(\int_{\mathbb{R}^n}|D\hat{\varphi}_i|^p dx\right)^{1-\frac{p}{2}}\right],\\
       &\le C(n,p).
       \end{align*}
       It follows that, up to passing to a subsequence, $$\hat{\varphi}_i\rightharpoonup \hat{\varphi}\quad \text{in}\ \dot{W}^{1,p}(\mathbb{R}^n),\quad \hat{\varphi}_i\rightarrow \hat{\varphi}\quad \text{in}\ L_{\rm{loc}}^2(\mathbb{R}^n).$$
       Moreover, by H\"{o}lder inequality and Sobolev inequality, we have 
       \begin{align*}
           \int_{\mathbb{R}^n\setminus B(0,\rho)}v^{p^*-2}|\hat{\varphi}_i|^2 dx&\le \left(\int_{\mathbb{R}^n\setminus B(0,\rho)}v^{p^*}dx\right)^{1-\frac{2}{p^*}}\left(\int_{\mathbb{R}^n\setminus B(0,\rho)}|\hat{\varphi}_i|^{p^*}dx\right)^\frac{2}{p^*}\\
           &\le \left(\int_{\mathbb{R}^n\setminus B(0,\rho)}v^{p^*}dx\right)^{1-\frac{2}{p^*}}\left(\int_{\mathbb{R}^n}|D\hat{\varphi}_i|^p dx\right)^\frac{2}{p}
           \\
           &\le C(n,p)\left(\int_{\mathbb{R}^n\setminus B(0,\rho)}v^{p^*}dx\right)^{1-\frac{2}{p^*}}\rightarrow 0,\quad \text{as}\ \rho\rightarrow\ \infty.
       \end{align*}
       Combining with the strong $L_{\rm{loc}}^2$-convergence of 
       $\hat{\varphi}_i,$
       we derive
       \begin{equation}\label{orth}
           \hat{\varphi}_i\rightarrow \hat{\varphi},\quad \text{in}\ L^2(\mathbb{R}^n;v^{p^*-2}).
           \end{equation}
       Letting $i\rightarrow \infty,$ by the analog of \eqref{gap-ineq-r1-s3}, we know that $\|\hat{\varphi}\|_{L^2(\mathbb{R}^n;v^{p^*-2})}$ is strictly positive and by that of \eqref{gap-ineq-r1-s4}, we have 
       $$|\mathcal{S}_{i,R}|\rightarrow 0\quad \text{and}\quad  \int_{\mathcal{R}_{i,R}}\omega_{1,i}|D\hat{\varphi}_i|^2 dx\le C(n,p),\quad \forall\ R>1.$$
       Then, combining with  \eqref{gap-ineq-r1-s7}, up to a subsequence, it holds 
     $$D\hat{\varphi}_i \chi_{\mathcal{R}_{i,R}}\rightharpoonup D\hat{\varphi}\chi_{B(0,R)\setminus B(0,1/R)}\quad \text{in}\ 
 L^2(\mathbb{R}^n,\mathbb{R}^n),\quad \forall\ R>1,$$
       since $\hat{\varphi}_i$ is weakly converges to $\hat{\varphi}$ in $\dot{W}^{1,p}(\mathbb{R}^n).$
Set $\hat{\varphi}_i=\hat{\varphi}+\psi_i$ and define $f_{i,1},\ f_{i,2}$ as in the case $p\le \frac{2n}{n+2},$ following the same procedure as in Case $(1)$, we obtain
\begin{multline*}
    \underset{i\rightarrow \infty}{\liminf}\int_{\mathcal{R}_{i,R}}\omega_{1,i}|D\hat{\varphi}_i|^2+(p-2)\omega_{2,i}\left(\frac{|Dv+D\varphi_i|-|Dv|}{\epsilon_i}\right)^2dx\\
    \ge \int_{B(0,R)\setminus B(0,1/R)}|Dv|^{p-2}|D\hat{\varphi}|^2+(p-2)|Dv|^{p-2}\left(\frac{Dv}{|Dv|}\cdot D\hat{\varphi}\right)^2 dx.
\end{multline*}
Combining with \eqref{gap-ineq-r2-s1} and letting $R\rightarrow +\infty,$ we derive  \eqref{gap-ineq-s123}, which contradicts Lemma \ref{Poincare-ineq} as $\hat{\varphi}$ is orthogonal to $T_v\mathcal{M}$ by \eqref{orth}. This concludes $(2)$ of this lemma.

\medskip

\noindent {\bf Case} $(3):$ $p\ge 2.$ Suppose that the inequality \eqref{gap-ineq-r3} fails. Then there exists a sequence $0\not\equiv \varphi_i\rightarrow 0$ in $\dot{W}^{1,p}(\mathbb{R}^n),$ with $\varphi$ orthogonal to $T_v\mathcal{M},$ such that
\begin{multline}\label{gap-ineq-r3-s1}
            \int_{\mathbb{R}^n}\omega_{3,i}|D\varphi_i|^2+(p-2)\omega_{4,i}(|D(v+ \varphi_i)|-|Dv|)^2dx \\
            <\big(p^*-1+\lambda S^{-p}\big)\int_{\mathbb{R}^n}v^{p^*-2}|\varphi_i|^2 dx,
    \end{multline}
    where $\omega_{3,i}$ and $\omega_{4,i}$ corresponds to $\varphi_i.$ 
    
    Write $$\epsilon_i:=\|\varphi_i\|_{\dot{W}^{1,2}(\mathbb{R}^n;|Dv|^{p-2})}\le \left(\int_{\mathbb{R}^n}|Dv|^p dx\right)^\frac{p-2}{p}\left(\int_{\mathbb{R}^n}|D\varphi_i|^p dx\right)^{\frac{2}{p}}\rightarrow 0$$
and $ \hat{\varphi}_i=\frac{\varphi_i}{\epsilon_i},$ then $\|\hat{\varphi}_i\|_{\dot{W}^{1,2}(\mathbb{R}^n;|Dv|^{p-2})}=1.$ By \cite[Proposition 3.2]{FZ2022}, up to passing to a subsequence, we have $\hat{\varphi}_i\rightharpoonup\hat{\varphi}$ in $\dot{W}_{\rm{loc}}^{1,2}(\mathbb{R}^n;|Dv|^{p-2})$  and $\hat{\varphi}_i\rightarrow\hat{\varphi}$ in $L^2(\mathbb{R}^n;v^{p^*-2}).$ 
Combining with \eqref{lower-estimate} and \eqref{gap-ineq-r3-s1}, we have 
$$0<c_3=c_3\int_{\mathbb{R}^n}|Dv|^{p-2}|D\hat{\varphi}_i|^2 dx\le \int_{\mathbb{R}^n}\omega_{3,i}|D\varphi_i|^2\le (p^*-1+\lambda S^{-p})\int_{\mathbb{R}^n} v^{p^*-2}|\hat{\varphi}_i|^2 dx,$$
which implies 
$$\|\hat{\varphi}\|_{L^2(\mathbb{R}^n;v^{p^*-2})}\ge c(n,p)>0.$$
Moreover, since $p\ge 2,$ for any $R>1,$ we get
\begin{multline}\label{gap-ineq-r3-s2}
            \int_{B(0,R)\setminus B(0,1/R)}\omega_{3,i}|D\hat{\varphi}_i|^2+(p-2)\omega_{4,i}\left(\frac{|D(v+\varphi_i)|-|Dv|}{\epsilon_i}\right)^2dx \\
\le\big(p^*-1+\lambda S^{-p}\big)\int_{\mathbb{R}^n}v^{p^*-2}|\hat{\varphi_i}|^2 dx.
    \end{multline}
    Observe that \eqref{lower-estimate} gives   $$0<c(R)\le c_3|Dv|^{p-2}\le \omega_{3,i}\le |Dv|^{p-2}\le C(R),\quad \text{on}\ B(0,R)\setminus B(0,1/R).$$ Let $$\psi_i=\hat{\varphi}_i-\hat{\varphi},$$ then $$\psi_i\rightharpoonup 0,\ \text{in}\ \dot{W}^{1,2}_{\rm{loc}}(\mathbb{R}^n\setminus \{0\}).$$
    Notice that $$\omega_{j,i}\le |Dv|^{p-2}\quad \text{and}\quad \omega_{j,i}\rightarrow |Dv|^{p-2}\ {\rm a.e.},\quad j=\{3,4\}, $$
    then as in the case $\frac{2n}{n-2}<p<2,$ we get
  \begin{multline*}
           \underset{i\rightarrow \infty}{\liminf}\int_{B(0,R)\setminus B(0,1/R)}\omega_{3,i}|D\hat{\varphi}_i|^2+(p-2)\omega_{4,i}\left(\frac{|D(v+\varphi_i)|-|Dv|}{\epsilon_i}\right)^2dx \\
\ge   \int_{B(0,R)\setminus B(0,1/R)}|Dv|^{p-2}|D\hat{\varphi}|^2+(p-2)|Dv|^{p-2}\left(\frac{Dv}{|Dv|}\cdot D\hat{\varphi}\right)^2 dx.
\end{multline*}
Combining with \eqref{gap-ineq-r3-s2} and letting $R\rightarrow +\infty$, we derive \eqref{gap-ineq-s123}, which contradicts Lemma \ref{Poincare-ineq}. This concludes $(3)$ of this lemma.
\end{proof}

\section{Proof of Theorem \ref{main-result}}
With the necessary preparations in place, we are now in a position to prove Theorem \ref{main-result}.

\begin{proof} 
Thanks to Lemma \ref{orth-V-lem}, there exists $v\in \mathcal{M},$ such that $u-v$ is orthogonal to $T_v\mathcal{M}$ and $\|Du-Dv\|_{L^p(\mathbb{R}^n)}\le \omega(\delta).$ Moreover, we write $u=v+\epsilon \varphi$ and assume that  $$\|D\varphi\|_{L^p(\mathbb{R}^n)}=1,\ \epsilon\le \omega(\delta).$$
Recall that $P(u)=-{\rm div}(|Du|^{p-2}Du)-|u|^{p^*-2}u.$ Then testing $P(u)$ against $\epsilon\varphi$ yields 
\begin{equation*}
\left<P(u),\epsilon\varphi\right>=\epsilon\int_{\mathbb{R}^n}-{\rm div}(|Du|^{p-2}Du)\varphi dx -\epsilon \int_{\mathbb{R}^n} |u|^{p^*-2}u \varphi dx.
\end{equation*}
Combining with $$ \left<P(u),\epsilon\varphi\right>\ \le \epsilon \|P(u)\|_{W^{-1,q}(\mathbb{R}^n)} \|D\varphi\|_{L^p(\mathbb{R}^n)},$$ it holds that
\begin{equation}\label{main-ineq}
  \epsilon \int_{\mathbb{R}^n} |Du|^{p-2}DuD\varphi dx - \epsilon\int_{\mathbb{R}^n} |u|^{p^*-2}u\varphi dx\le  \epsilon \|P(u)\|_{W^{-1,q}(\mathbb{R}^n)} \|D\varphi\|_{L^p(\mathbb{R}^n)}. 
\end{equation}

We split the proof into three cases. 

\noindent\textbf{Case $(1)$:} $1<p\le \frac{2n}{n+2}.$ We show there exists $C=C(n.p)>0,$ such that 
        $$\|Du-Dv\|_{L^p(\mathbb{R}^n)}\le C\|P(u)\|_{W^{-1,q}(\mathbb{R}^n)}.$$
        By setting $x=Dv$ and $y=\epsilon D\varphi$ in \eqref{key-ineq-p-lem-r1}, we derive that
    \begin{align}\label{main-r1-s1-1}
        \epsilon \int_{\mathbb{R}^n}|Du|^{p-2}Du\cdot D\varphi &\ge  \epsilon \int_{\mathbb{R}^n}|Dv|^{p-2}Dv\cdot D\varphi dx+\epsilon^2(1-\kappa)\int_{\mathbb{R}^n}\omega_1|D\varphi|^2 dx\nonumber \\
        &\quad+(p-2)(1-\kappa)\int_{\mathbb{R}^n}\omega_2(|Du|-|Dv|)^2dx\nonumber\\
        &\quad +c_1\int_{\mathbb{R}^n}\min\{\epsilon^p|D\varphi|^p,\epsilon^2|Dv|^{p-2}|D\varphi|^2\}dx.
    \end{align}
    Let $a=v$ and $b=\epsilon \varphi$ in \eqref{key-ineq-p*-lem-r1}. It follows that 
    \begin{equation}\label{main-r1-s1-2}
       \epsilon \int_{\mathbb{R}^n} |u|^{p^*-2}u\varphi dx\le \ \epsilon \int_{\mathbb{R}^n}|v|^{p^*-2}v \varphi dx +\epsilon^2(p^*-1+\kappa)\int_{\mathbb{R}^n} \frac{(v+C_1|\epsilon \varphi|)^{p^*}}{v^2+|\epsilon \varphi|^2} \varphi^2dx.
    \end{equation}
    By \eqref{EL-equ} and the orthogonality, it holds that
    \begin{equation}\label{main-r1-s1-3}
    \int_{\mathbb{R}^n} |Dv|^{p-2}Dv\cdot D\varphi dx =\int_{\mathbb{R}^n}v^{p^*-1}\varphi dx=0. 
    \end{equation}
    Combining \eqref{main-ineq}-\eqref{main-r1-s1-3}, we arrive at
    \begin{align*}
        &\quad \epsilon \|P(u)\|_{W^{-1,q}(\mathbb{R}^n)}\|D\varphi\|_{L^p(\mathbb{R}^n)}\\
        & \ge \epsilon^2(1-\kappa)\bigg[ \int_{\mathbb{R}^n}\omega_1|D\varphi|^2+(p-2)\omega_2\left(\frac{|Dv+\epsilon D\varphi|-|Dv|}{\epsilon}\right)^2dx\bigg]\nonumber\\
&\quad+c_1\int_{\mathbb{R}^n}\min\{\epsilon^p|D\varphi|^p,\epsilon^2|Dv|^{p-2}|D\varphi|^2\}dx\nonumber \\
        &\quad -\epsilon^2 (p^*-1+\kappa)\int_{\mathbb{R}^n}\frac{(v+C_1|\epsilon \varphi|)^{p^*}}{v^2+|\epsilon \varphi|^2} \varphi^2 dx.
    \end{align*}
    Then, owing to \eqref{gap-ineq-r1}, we deduce that 
    \begin{align*}
         &\quad \epsilon \|P(u)\|_{W^{-1,q}(\mathbb{R}^n)}\|D\varphi\|_{L^p(\mathbb{R}^n)}\\
        &\ge \epsilon^2\left(1-\kappa-\frac{p^*-1+\kappa}{p^*-1+\lambda S^{-p}}\right)\int_{\mathbb{R}^n}\omega_1|D\varphi|^2\\
        &\quad +(p-2)\left(1-\kappa-\frac{p^*-1+\kappa}{p^*-1+\lambda S^{-p}}\right)\int_{\mathbb{R}^n}\omega_2(|Du|-|Dv|)^2dx\big]\\
        &\quad +\left(c_1-\frac{\gamma_0(p^*-1+\kappa)}{p^*-1+\lambda S^{-p}}\right) \int_{\mathbb{R}^n}\min\{\epsilon^p|D\varphi|^p,\epsilon^2|Dv|^{p-2}|D\varphi|^2\}dx.
    \end{align*}
    Via choosing $\kappa=\kappa(n,p)>0$ small enough such that  
    $$1-\kappa-\frac{p^*-1+\kappa}{p^*-1+\lambda S^{-p}}\ge 0,$$
    and $\gamma_0=\gamma_0(n,p)>0$ small enough such that
    $$\frac{c_1}{2} \le c_1-\frac{(p^*-1+\kappa) \gamma_0}{(p^*-1)+\lambda S^{-p}},$$
    we get
    \begin{equation*}
       \epsilon \|P(u)\|_{W^{-1,q}(\mathbb{R}^n)}\|D\varphi\|_{L^p(\mathbb{R}^n)} \ge \frac{c_1}{2}\int_{\mathbb{R}^n}\min\{\epsilon^p|D\varphi|^p,\epsilon^2|Dv|^{p-2}|D\varphi|^2\}dx.
    \end{equation*}
    On the other hand, according to $(4.5)$ in the proof of \cite[Theorem 1.1]{FZ2022}, it holds 
   \begin{equation}\label{compare-Dvarphi-ineq}
       \int_{\mathbb{R}^n}\min\{\epsilon^p|D\varphi|^p,\epsilon^2|Dv|^{p-2}|D\varphi|^2\}dx\ge c\epsilon^2\|D\varphi\|_{L^p(\mathbb{R}^n)}^2,
   \end{equation}
   where $c=c(n,p)>0.$
  Thus, we conclude that 
   \begin{equation*}
\|Du-Dv\|_{L^p(\mathbb{R}^n)}=\epsilon\|D\varphi\|_{L^p(\mathbb{R}^n)}\le C\|P(u)\|_{W^{-1,q}(\mathbb{R}^n)}.
   \end{equation*}
  
\noindent\textbf{Case $(2):$} $\frac{2n}{n+2}< p< 2.$ We show there exists $C=C(n,p)>0$ such that
        $$\|Du-Dv\|_{L^p(\mathbb{R}^n)}\le C \|P(u)\|_{W^{-1,q}(\mathbb{R}^n)}.$$
        The proof is quite analogous to that of the case $1<p\le \frac{2n}{n+2}$ with a small modification: setting $a=v$ and $b=\epsilon \varphi$ in \eqref{key-ineq-p*-lem-r2},
    \begin{align}\label{main-r2-s1-1}
      \epsilon \int_{\mathbb{R}^n} |u|^{p^*-2}u\varphi dx
       &\le \epsilon \int_{\mathbb{R}^n}|v|^{p^*-2}v\varphi dx +\epsilon^2(p^*-1+\kappa)\int_{\mathbb{R}^n} |v|^{p^*-2} \varphi^2dx\nonumber\\
       &\quad+C_2\epsilon^{p^*}\int_{\mathbb{R}^n}|\varphi|^{p^*}dx .
    \end{align}
    Combining with \eqref{main-r1-s1-1}, \eqref{main-r1-s1-3} and \eqref{gap-ineq-r2}, we get 
     \begin{align*}
         &\quad \epsilon \|P(u)\|_{W^{-1,q}(\mathbb{R}^n)}\|D\varphi\|_{L^p(\mathbb{R}^n)}\\
         &\ge \epsilon^2(1-\kappa) \int_{\mathbb{R}^n}\omega_1|D\varphi|^2dx
         +(p-2)(1-\kappa)\int_{\mathbb{R}^n}\omega_2(|Dv+\epsilon D\varphi|-|Dv|)^2dx\nonumber\\
    &\quad +c_1\int_{\mathbb{R}^n}\min\{\epsilon^p|D\varphi|^p,\epsilon^2|Dv|^{p-2}|D\varphi|^2\}dx \\
        &\quad -\epsilon^2 (p^*-1+\kappa)\int_{\mathbb{R}^n}v^{p^*-2}|\varphi|^2 dx-C_2\epsilon^{p^*}\int_{\mathbb{R}^n}|\varphi|^{p^*}dx\\
        &\ge c\epsilon^2\|D\varphi\|_{L^p(\mathbb{R}^n)}^2-C_2\epsilon^{p^*}\int_{\mathbb{R}^n}|\varphi|^{p^*}dx,
    \end{align*}
    where the last step follows from \eqref{compare-Dvarphi-ineq} and choosing suitable $\kappa, \gamma_0>0.$ Noting that $$1=\|D\varphi\|_{L^p(\mathbb{R}^n)}\ge S\|\varphi\|_{L^{p^*}(\mathbb{R}^n)}$$ and $p^*>2,$ we can derive the desired result provided $\epsilon$ is small enough. 
    
\noindent\textbf{Case $(3):$} $p\ge 2.$ We show there exists $C=C(n,p)>0,$ such that 
        $$\|Du-Dv\|^{p-1}_{L^p(\mathbb{R}^n)}=\|\epsilon D \varphi\|_{L^p(\mathbb{R}^n)}^{p-1}\le C \|P(u)\|_{W^{-1,q}(\mathbb{R}^n)}.$$ Letting $x=Dv, y=\epsilon D\varphi$ in \eqref{key-ineq-p-lem-r2} and $a=v, b=\epsilon \varphi$ in \eqref{key-ineq-p*-lem-r2}, we get \eqref{main-r2-s1-1} and 
        \begin{align}\label{main-r3-s1-1}
           \epsilon\int_{\mathbb{R}^n}|Du|^{p-2}Du\cdot D\varphi dx \ge &\ \epsilon\int_{\mathbb{R}^n} |Dv|^{p-2}Dv\cdot D\varphi dx +(1-\kappa)\epsilon^2\int_{\mathbb{R}^n}\omega_3|D\varphi|^2dx \nonumber\\
           &+(p-2)(1-\kappa)\int_{\mathbb{R}^n}\omega_4\big(|Dv|-|Du|)^2 dx \nonumber\\
           &+c_2\epsilon^p\int_{\mathbb{R}^n} |D\varphi|^p dx.
        \end{align}
         Combining this with the spectral gap inequality \eqref{gap-ineq-r3} and \eqref{main-ineq}, it follows from \eqref{main-r2-s1-1} and \eqref{main-r3-s1-1} that 
    \begin{align*}
        &\quad \epsilon \|P(u)\|_{W^{-1,q}(\mathbb{R}^n)}\|D\varphi\|_{L^p(\mathbb{R}^n)}\nonumber\\
        &\ge \epsilon^2\left(1-\kappa-\frac{p^*-1+\kappa}{p^*-1+\lambda S^{-p}}\right)\left( \int_{\mathbb{R}^n}\omega_3|D\varphi|^2\nonumber
        +(p-2)\omega_4\left(\frac{|Du|-|Dv|}{\epsilon}\right)^2dx\right)\\
        &\quad+c_2\epsilon^p\|D\varphi\|_{L^p}^p-C_2\epsilon^{p^*}\int_{\mathbb{R}^n}|\varphi|^{p^*}dx\\
        &\ge C \epsilon^p\|D\varphi\|_{L^p(\mathbb{R}^n)}^p,
    \end{align*}
    where $C=C(n,p)$ and the last inequality follows from  $1=\|D\varphi\|_{L^p(\mathbb{R}^n)}\ge S \|\varphi\|_{L^{p^*}(\mathbb{R}^n)}$ and $p^*>2.$ Hence, it holds $$\|Du-Dv\|^{p-1}_{L^p(\mathbb{R}^n)}=\epsilon^{p-1}\|D\varphi\|^{p-1}_{L^p(\mathbb{R}^n)}\le C \|P(u)\|_{W^{-1,q}(\mathbb{R}^n)}.$$

    All in all, we conclude Theorem \ref{main-result}.
\end{proof}
\appendix
\section{Proof of Lemma \ref{key-ineq-p*-lem}}
\begin{proof}[Proof of Lemma \ref{key-ineq-p*-lem}]
We divide the proof into two steps.

\noindent\textbf{Step 1:}
We first show  \eqref{key-ineq-p*-lem-r1}. Recall that $p^*\le 2$ and $a\neq 0$. 
Furthermore, by substituting $-b$ for $b$, we can assume without loss of generality that $a>0$.
Then by writing $t=\frac{b}{a},$ the inequality \eqref{key-ineq-p*-lem-r1} is equivalent to  
    \begin{equation}\label{key-ineq-p*-r1-s1}
        |1+t|^{p^*-2}(1+t)t-t\le (p^*-1+\kappa)\frac{(1+C_1|t|)^{p^*}}{1+|t|^2}|t|^2
    \end{equation}
    for every $t\in \mathbb{R}$ and some $C_1>0.$

    On the one hand, for any $|t|\ll 1,$ applying a Taylor expansion, we get
    $$|1+t|^{p^*-2}(1+t)t=|1+t|^{p^*-1}t=t+(p^*-1)t^2+o(t^2).$$
    Notice that $G(t):=t\mapsto t^{\frac{1}{p^*}}$ is concave since $G''(t)=\frac{1}{p^*}(\frac{1}{p^*}-1)t^{\frac{1}{p^*}-2}\le 0,$ which implies $$1+\frac{1}{p^*}|t|^2\ge (1+|t|^2)^{\frac{1}{p^*}},\quad \text{for all}\ |t|\le 1.$$
    Thus, there exists $t_0=t_0(p^*)>0$ small such that, for any $C_1\ge \frac{1}{p^*}$ and $t\in [-t_0,t_0],$
    $$|1+t|^{p^*-2}(1+t)t\le t+ (p^*-1+\kappa)\frac{(1+C_1|t|)^{p^*}}{1+|t|^2}|t|^2.$$

    On the other hand, when $|t|>t_0,$ in order to prove  \eqref{key-ineq-p*-r1-s1}, it is sufficient to show the existence of $C_1<+\infty,$ for which,
    \begin{equation}\label{key-ineq-r1-s2}\left[\left((1+|t|^2)\frac{|1+t|^{p^*-2}(1+t)t-t}{(p^*-1+\kappa)|t|^2}\right)^{\frac{1}{p^*}}-1\right]|t|^{-1}\le C_1.
    \end{equation}
    Notice that when $|t|\rightarrow +\infty,$ the left hand side of \eqref{key-ineq-r1-s2} is bounded.  This together with the boundedness of a continuous function on a compact set implies that, there exists $C_1<+\infty$ such that, for any $t\in \mathbb{R}^n\setminus (-t_0,t_0),$ \eqref{key-ineq-r1-s2} holds. All in all, we conclude $(1)$ of this lemma.

    \noindent\textbf{Step 2:} Now, we give the proof of \eqref{key-ineq-p*-lem-r2} by similar arguments of \cite[Lemma 3.2]{FN2019}. 
    
    If $a=0,$ inequality \eqref{key-ineq-p*-lem-r2} is trivial since it reduces to $|b|^{p^*}\le  C_{2}|b|^{p^*}.$ Next, we assume $a\ne 0.$
    Suppose  \eqref{key-ineq-p*-lem-r2} fails. Then, there exist $\kappa>0,\{C_{2,j}\}\subset \mathbb{R}$ satisfying $C_{2,j}\rightarrow +\infty$ and $\{a_j\},\{b_j\}\subset \mathbb{R}$ such that 
    \begin{equation}\label{key-ineq-r1-s3}
        |a_j+b_j|^{p^*-2}(a_j+b_j)b_j>|a_j|^{p^*-2}a_jb_j+(p^*-1+\kappa)|a_j|^{p^*-2}|b_j|^2+C_{2,j}|b_j|^{p^*}.  
        \end{equation}
        Furthermore, by substituting $-b_j$ for $b_j$, we can assume without loss of generality that $a_j>0.$ Dividing both sides of \eqref{key-ineq-r1-s3} by $a_j^{p^*}$, we obtain 
        \begin{equation}\label{key-ineq-r1-s4}            \frac{|a_j+b_j|^{p^*-2}(a_j+b_j)b_j}{a_j^{p^*}}>\frac{b_j}{a_j}+(p^*-1+\kappa)\left|\frac{b_j}{a_j}\right|^2+C_{2,j}\frac{|b_j|^{p^*}}{a_j^{p^*}}.
        \end{equation}
        As $C_{2,j}\rightarrow +\infty,$ then it holds $$\frac{|b_j|}{a_j}\rightarrow 0,\quad \text{when}\ j\rightarrow \infty$$
         at a sufficiently fast rate. However, the Taylor expansion yields 
          \begin{equation*}
            \frac{|a_j+b_j|^{p^*-2}(a_j+b_j)b_j}{a_j^{p^*}}=\frac{b_j}{a_j}+(p^*-1)\frac{b_j^2}{a_j^2}+o\left(\frac{b_j^2}{a_j^2}\right),
        \end{equation*}
         which is smaller than the right-hand side of \eqref{key-ineq-r1-s4}. This leads to a contradiction. Thus, we conclude \eqref{key-ineq-p*-lem-r2}. 
\end{proof}

\end{document}